\newcommand{\tRe}{\textup{Re }}
\newcommand{\tIm}{\textup{Im }}
\newcommand{\sumstar}{\sideset{}{^*}\sum}
\newcommand{\sumh}{\sideset{}{^h}\sum}
\newcommand{\sumd}{\sideset{}{^d}\sum}
\newcommand{\sumt}{\sideset{}{^{rel}}\sum}
\newcommand{\es}[1]{\begin{equation}\begin{split}#1\end{split}\end{equation}}
\newcommand{\est}[1]{\begin{equation*}\begin{split}#1\end{split}\end{equation*}}
\renewcommand{\mod}[1]{~\pr{\textnormal{mod}~#1}}
\newtheorem{thm}{Theorem}[section]
\newtheorem{prop}[thm]{Proposition}
\newtheorem{lem}[thm]{Lemma}
\newtheorem{lemma}[thm]{Lemma}
\theoremstyle{remark}
\newtheorem{rem*}{Remark}
\newcommand{\pr}[1]{\left( #1\right)}
\newcommand{\pg}[1]{\left\{ #1\right\}}
\newcommand{\e}[1]{\operatorname{e}\pr{ #1}}
\def\sumstar{\operatornamewithlimits{\sum\nolimits^*}}
\newcommand{\sumtwo}{\operatorname*{\sum\sum}}
\newcommand{\sumfour}{\operatorname*{\sum\sum\sum\sum}}
\newcommand{\summany}{\operatorname*{\sum... \sum}}
\newcommand{\comment}[1]{}
\let\originalleft\left
\let\originalright\right
\renewcommand{\left}{\mathopen{}\mathclose\bgroup\originalleft}
\renewcommand{\right}{\aftergroup\egroup\originalright}
\numberwithin{equation}{section}
\begin{document}
\title{The eighth moment of the family of $\Gamma_1(q)$-automorphic $L$-functions}

\date{\today}
\subjclass[2010]{11F11, 11F67}
\keywords{$\Gamma_1(q)$-automorphic $L$-functions, moments , asymptotic large sieve}

\author[V. Chandee]{Vorrapan Chandee}
\address{Department of Mathematics \\ Burapha University \\ 169 Long-Hard Bangsaen rd, Saensuk, Mueng, Chonburi, Thailand,  20131}
\email{vorrapan@buu.ac.th }

\author[X. Li]{Xiannan Li}
\address{Department of Mathematics \\ Kansas State University \\
	138 Cardwell Hall, Manhattan, KS 66506, United States}
\email{xiannan@math.ksu.edu}

\allowdisplaybreaks
\numberwithin{equation}{section}
\selectlanguage{english}
\begin{abstract}
We prove a Lindel\"of on average bound for the eighth moment of a family of $L$-functions attached to automorphic forms on $GL(2)$, the first time this has been accomplished.  Previously, such a bound had been proven  for the sixth moment for our family by Djankovi\'c \cite{Dj} and for a similar family by Young \cite{Young}.  Our proof rests on a new approach which overcomes the lack of perfect orthogonality in the family initially observed by Iwaniec and Li \cite{IL}.
\end{abstract}

\maketitle  
\section{Introduction}   
One particularly prominent field of study in analytic number theory is the size and distribution of values of $L$-functions.  These are studied not only for its own sake, but also because good understanding of these properties frequently imply great results about arithmetic objects attached to these $L$-functions.  The larger program here is to understand $L$-functions in general, and thereby understand many dissimilar arithmetic objects via the same route.  However, it turns out that this is quite challenging, and different families of $L$-functions require very different methods, despite their superficial similarities. 

In this paper, we study a family of $L$-functions attached to automorphic forms on $GL(2)$.  In particular, we shall derive an upper bound for the eighth moment of this family, which is of the same quality as the Lindelof hypothesis on average.  Previously, Djankovic \cite{Dj} had derived the same upper bound for the sixth moment of this family.  A comparable family had been studied by M. Young \cite{Young} and Luo \cite{Luo}.  Essentially, they study the family of $L$-functions attached to Hecke Maass forms with Laplace eigenvalue $\lambda_j = 1/4 + t_j^2$, averaged over all $t_j \in [T, 2T]$.  Young \cite{Young} then derived an upper bound for the sixth moment of this family of the same quality as the Lindelof hypothesis on average.  However, the best upper bound for the eighth moment due to \cite{Luo} exceeds the Lindelof quality bound by $T^{1/2}$.  Thus, our work is the first time a Lindelof on average bound has been achieved for the eighth moment of a $GL(2)$ family.

Typically upper bounds of this type are closely connected to large sieve type bounds.  As noted by other authors, such bounds are more subtle for many $GL(2)$ families as compared to the classical $GL(1)$ family with Dirichlet characters.  For our family, a large sieve was developed by Iwaniec and Xiaoqing Li \cite{IL}.  However, their large sieve indicates that the family is not perfectly orthogonal - in particular, certain terms are larger than expected if the coefficients are chosen to look like certain Bessel functions twisted by Kloosterman sums.  It is for this structural reason that Djankovi\'{c} was able to develop good upper bounds for the sixth moment, and his methods fail in the case of the eighth moment.

Now, we will be more precise.  Let $S_k(\Gamma_0(q), \chi)$ be the space of cusp forms of weight $k \ge 2$ for the group $\Gamma_0(q)$ and the nebentypus character $\chi \mod q$,  where $$\Gamma_0(q) = \left\{ \left( \left.{\begin{array}{cc}
   a & b \\
   c & d \\
  \end{array} } \right)  \ \right|  \  ad- bc = 1 , \ \  c \equiv 0 \mod q \right\}.$$ 
Also, let $S_k(\Gamma_1(q))$ be the space of holomorphic cusp forms for the group 
$$\Gamma_1(q) = \left\{ \left( \left.{\begin{array}{cc}
   a & b \\
   c & d \\
  \end{array} } \right)  \ \right|  \  ad- bc = 1 , \ \  c \equiv 0 \mod q, \ \ \ a \equiv d \equiv 1 \mod q \right\}.$$
Note that $S_k(\Gamma_1(q))$ is a Hilbert space with the Petersson's inner product
$$ <f, g> = \int_{\Gamma_1(q) \backslash \mathbb H} f(z)\bar{g}(z) y^{k-2} \> dx \> dy,$$
and
$$ S_k(\Gamma_1(q)) = \bigoplus_{\chi \mod q} S_k(\Gamma_0(q), \chi).$$  

Let $\mathcal H_\chi \subset  S_k(\Gamma_0(q), \chi)$ be an orthogonal basis of $ S_k(\Gamma_0(q), \chi)$ consisting of Hecke cusp forms, normalized so that the first Fourier coefficient is $1$. For each $f \in \mathcal H_\chi $, we let $L(f, s)$ be the $L$-function associated to $f$, defined for $\tRe(s) > 1$ as 
\es{\label{def:Lfnc} 
L(f,s) = \sum_{n \geq 1} \frac {\lambda_f(n)}{n^{s}} = \prod_p \pr{1 - \frac{\lambda_f(p)}{p^s} + \frac{\chi(p)}{p^{2s}}}^{-1},}
where $\{\lambda_f(n)\}$ are the Hecke eigenvalues of $f$.  With our normalization, $\lambda_f(1) = 1 $.  In general, the Hecke eigenvalues satisfy the Hecke relation
\es{\label{eqn:Heckerelation}
\lambda_f(m) \lambda_f(n) = \sum_{d|(m,n)} \chi(d) \lambda_f\pr{\frac{mn}{d^2}},} 
for all $m,n \geq 1$. 
We define the completed $L$-function as
\es{\label{def:completedLfnc}\Lambda\pr{f,  s} = \pr{\frac q{4\pi^2}}^{\frac s2} \Gamma \pr{s + \tfrac {k-1}2} L\pr{f, s},}
which satisfies the functional equation
\es{\label{eqn:fnceq}\Lambda\pr{f, s} = i^k \overline{\eta}_f\Lambda\pr{\bar{f}, 1 - s},}
where $|\eta_f| = 1$ when $f$ is a newform.  

Suppose for each $f \in \mathcal H_\chi$, we have an associated number $\alpha_f$. Then we define the harmonic average of $\alpha_f$ over $\mathcal H_\chi$ to be 
\est{\sumh_{f \in \mathcal H_\chi} \alpha_f = \frac{\Gamma(k-1)}{(4\pi)^{k-1}}\sum_{f \in \mathcal H_\chi} \frac{\alpha_f}{\|f\|^2}.} 

We shall be interested in moments of the form 

\est{M_k(q) := \frac{2}{\phi(q)}\sum_{\substack{\chi \mod q \\ \chi(-1) = (-1)^k}} \sumh_{f \in \mathcal H_\chi} |L(f, 1/2)|^{2k}.}

 We note that the size of the family is around size $q^2$ while the conductor is around size $q$.  This should be compared with the family previously mentioned in the work of \cite{Young} and Luo \cite{Luo} where the family is around size $T^2$ with conductor around size $T$. \footnote{The mechanism is somewhat different however - this family exhibits a certain conductor dropping phenomenon while ours has increased size.} For prime level, $\eta_f$ can be expressed in terms of Gauss sums, and in particular we expect $\eta_f$ to equidistribute on the circle as $f$ varies over an orthogonal basis of $S_k(\Gamma_1(q))$.  Thus, we expect our family of $L$-functions to be unitary.
 
As mentioned previously, Djankovi\'{c} \cite{Dj} studied the sixth moment of this family and obtained the following upper bound, which is consistent with the Lindel\"{o}f hypothesis.
 
 \begin{thm}[Djankovi\'{c}] Let $q$ be a prime number and $k \geq 3$ an odd integer. Then we have as $q \rightarrow \infty$
 	$$ M_3(q) \ll q^\epsilon$$
 	for any $\epsilon > 0$. Note that the implied constant depends on $\epsilon$.
 	
 \end{thm}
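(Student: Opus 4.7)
The plan is to attack this sixth moment bound through the standard moment machinery: approximate functional equation, Petersson trace formula, and a careful off-diagonal analysis. Since each $L(f,1/2)$ has analytic conductor $\asymp q$ and the family has size $\asymp q^2$, expanding $|L(f,1/2)|^6$ via the AFE applied to $L(f,1/2)^3$, and resolving the products of Hecke eigenvalues via \eqref{eqn:Heckerelation}, should yield
\est{|L(f,1/2)|^6 \approx \sum_{m,n \ll q^{3/2+\eps}} \frac{\tau_3^{\chi}(m)\,\overline{\tau_3^{\chi}(n)}}{\sqrt{mn}} \,\lambda_f(m)\,\overline{\lambda_f(n)}\, W\!\pr{\tfrac{mn}{q^3}} \;+\; \text{(dual terms)},}
where $\tau_3^\chi$ is a ternary divisor function twisted by the nebentypus $\chi$, and $W$ is a smooth rapidly decaying cutoff.

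Next I would interchange summations and apply the Petersson trace formula on $\mathcal H_\chi$, which contributes a diagonal $\delta_{m=n}$ together with an off-diagonal
\est{2\pi i^{-k} \sum_{q \mid c} \frac{S_\chi(m,n;c)}{c} \, J_{k-1}\!\pr{\tfrac{4\pi\sqrt{mn}}{c}}.}
Averaging the twisted Kloosterman sum $S_\chi(m,n;c)$ over $\chi \mod q$ with the parity constraint $\chi(-1) = (-1)^k$ collapses the character sum, leaving a standard Kloosterman-type sum with the extra restriction $a \equiv \pm 1 \mod q$ on the inner variable --- this is precisely the orthogonality structure underpinning the Iwaniec--Xiaoqing Li large sieve \cite{IL}. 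The diagonal contribution at $m = n$ is handled by a trivial estimate: $\sum_{m \ll q^{3/2+\eps}} \tau_3(m)^2/m \ll (\log q)^9 \ll q^\eps$.

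The crux of the argument is the off-diagonal, with dominant contribution from the smallest modulus $c = q$, where $\sqrt{mn}/c$ can reach size $q^{1/2}$ so that $J_{k-1}$ sits in its oscillatory regime. To extract cancellation I would apply Poisson summation (or equivalently Voronoi) in the $m$ and $n$ variables, using the Kloosterman phase to dualize the sums. The net effect is that the length $q^{3/2}$ gets traded for a much shorter dual length once one accounts for the conductor drop from the $a \equiv \pm 1 \mod q$ condition. Stationary-phase analysis of the Bessel transform together with Weil's bound on the resulting short exponential sums should then bring the off-diagonal down to $q^\eps$.

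The main obstacle will be precisely this off-diagonal step. A direct application of Weil's bound to $S_\chi(m,n;q)$ combined with the trivial length $q^{3/2}$ in $m$ and $n$ and the weight $1/c$ leaves a term of size $q^{\alpha}$ with $\alpha > 0$, missing the target. The necessary savings come from the interaction of three ingredients: the oscillation of the Bessel function, the extra character orthogonality ($a \equiv \pm 1 \mod q$), and the dualization of the $m,n$ sums. For the sixth moment, the orthogonality from the $\chi$-average is just enough to close the estimate; as the authors emphasize, this delicate balance is exactly what breaks at the eighth moment, where the structural deviation from perfect orthogonality detected in \cite{IL} forces the new approach developed in the present paper.
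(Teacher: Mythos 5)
Your overall strategy is the right one and is essentially the route that underlies both Djankovi\'c's proof and the present paper: approximate functional equation for $L(f,1/2)^3$, reduction to a bilinear form in Hecke eigenvalues of length $q^{3/2}$, Petersson formula plus the character average over $\chi \bmod q$ (which is exactly how the Iwaniec--Li asymptotic large sieve, Lemma \ref{lem:ALS}, is produced), a trivial diagonal, and a delicate off-diagonal. Note also that, within this paper, the sixth moment bound is an immediate corollary of Theorem \ref{mainthm:bound} by H\"older's inequality, since the harmonic measure $\frac{2}{\phi(q)}\sum_{\chi}\sumh_{f}1$ of the family is $O(1)$.

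The genuine gap is the one you yourself flag: the off-diagonal estimate, which is the entire content of the theorem, is asserted rather than carried out, and the mechanism you describe for it is not correct as stated. First, you cannot apply Poisson summation directly in $m$ and $n$ because the coefficients $\tau_3(m)$ are not smooth; one needs a Voronoi summation formula for the ternary divisor function, or (as Djankovi\'c and this paper do for $\sigma_4=\sigma*\sigma$) a factorization $\tau_3 = 1*\tau$ followed by the $GL(2)$ Voronoi formula of Lemma \ref{lem:voronoidn}, which produces the Bessel kernels $Y_0$ and $K_0$ and the attendant main terms that must all be controlled separately. Second, the claimed ``much shorter dual length'' does not materialize: after the $\chi$-average collapses $S_\chi(m,n;q)$ to $\e{\pm(m+n)/q}$, Voronoi in $m$ of length $q^{3/2}$ to modulus $q$ for $\tau_3$ dualizes to a sum of comparable length $\asymp q^{3}/q^{3/2}=q^{3/2}$; there is no conductor drop to exploit, and Weil's bound is not even available at $c=q$ since the Kloosterman sums are gone. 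The actual saving comes from the asymptotics of $J_{k-1}$ and $Y_0$ (Lemmas \ref{lem:Besselresult} and \ref{lem:Y0K0}), stationary phase in the resulting oscillatory integrals, and a final $GL(1)$ large sieve (Lemma \ref{lem:largesieve}) applied to the surviving ranges --- precisely the case analysis occupying Sections \ref{sec:F2}--\ref{sec:F3} here in the harder $\tau_4$ setting. Until that analysis is executed for $\tau_3$, with the correct treatment of the moduli $c=qr$, $r>1$, and of the Voronoi main terms, the proposal does not constitute a proof.
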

 
In recent work \cite{CL2}, the authors were able to derive an asymptotic with a power saving for the sixth moment.  In this paper, we prove the following Lindel\"{o}f upper bound for the eighth moment.
 \begin{thm} \label{mainthm:bound} Let $q$ be a prime number and $k \geq 3$ an odd integer. Then we have as $q \rightarrow \infty$
 	$$ M_4(q) \ll q^\epsilon$$
 	for any $\epsilon > 0$. Note that the implied constant depends on $\epsilon$.
 	
 \end{thm}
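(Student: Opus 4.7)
The plan is to attack $M_4(q)$ by combining an approximate functional equation for $L(f,1/2)^4$ with the Petersson trace formula summed over all nebentypus characters $\chi \mod q$, and then carefully analysing the off-diagonal arithmetic sums that emerge.

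First, I would use the functional equation \eqref{eqn:fnceq} to write an approximate functional equation for $L(f,1/2)^4$, which is a Dirichlet sum of effective length $q^{2+\epsilon}$. By the Hecke relation \eqref{eqn:Heckerelation}, the coefficients of $L(f,s)^4$ can be arranged in the form $\lambda_f(n) \beta_\chi(n)$ with $\beta_\chi(n)$ essentially bounded by $\tau_4(n) q^\epsilon$. This expresses $|L(f,1/2)|^8$ as a smooth-cutoff double sum
\est{
|L(f,1/2)|^8 \approx \sum_{m,n \le q^{2+\epsilon}} \frac{\beta_\chi(m)\overline{\beta_\chi(n)}\, \lambda_f(m)\overline{\lambda_f(n)}}{\sqrt{mn}}.
}
Summing over $f \in \mathcal H_\chi$ with harmonic weights and then over $\chi \mod q$, Petersson's formula together with the orthogonality relation $\sum_\chi \chi(d) = \phi(q)\,\delta(d \equiv 1 \mod q)$ produces a diagonal term $\phi(q)\,\delta(m=n)$ plus a sum over Bessel functions $J_{k-1}(4\pi\sqrt{mn}/c)$ twisted by a "shifted Kloosterman" sum
\est{
T(m,n;c) = \sum_{\substack{d \mod c\\ (d,c)=1\\ d \equiv 1 \mod q}} e\!\left(\frac{md + n\bar d}{c}\right), \qquad q \mid c.
}

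The diagonal contribution $\sum_{m \le q^{2+\epsilon}}|\beta_\chi(m)|^2/m$ is easily controlled by $q^\epsilon$ from standard sieve estimates on $\tau_4$. Next I would split the off-diagonal according to $c=q$ versus $c=qc'$ with $c' \ge 2$. For $c' \ge 2$ the usual combination of Voronoi summation in $m,n$, Weil-type bounds on $T(m,n;c)$, and the bound $J_{k-1}(x) \ll \min(x^{k-1}, x^{-1/2})$ should be enough, because the effective transition range $\sqrt{mn} \sim c \ge 2q$ places the relevant coefficients into a dyadic cell where the dual sum is short.

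The serious obstacle is the $c = q$ contribution: here $T(m,n;q) = e((m+n)/q)$, the Bessel argument $4\pi\sqrt{mn}/q$ ranges up to $q$, and the resulting sum has no Kloosterman square-root cancellation. This is the precise "non-orthogonality" identified in \cite{IL} which defeated previous approaches at the $8$th moment. My strategy is to apply Voronoi (or Poisson) summation to the variables $m$ and $n$ separately against the additive twists $e(m/q), e(n/q)$: this converts the $c=q$ term into a dual double sum whose new "modulus" is again $q$, but now entangled with the Bessel kernel in a way that exposes a hidden diagonal. I would then reinterpret the remaining expression spectrally via a Kuznetsov-type formula on the dual side, replacing the troublesome term by an off-diagonal moment of $L$-functions on a shorter family. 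The target bound $q^\epsilon$ should then follow from iterating this device until all genuine off-diagonal interactions are short enough to be handled by a sixth-moment estimate in the spirit of \cite{Dj} and \cite{CL2}. The main technical difficulty --- and the heart of the "new approach" --- is managing the bookkeeping of these transformations so that the exceptional $c=q$ term is genuinely dissolved rather than merely transformed into an equally difficult dual sum.
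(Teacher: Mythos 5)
Your setup is sound and in fact parallels the paper's: the approximate functional equation for $L(f,1/2)^4$, followed by Petersson's formula and the orthogonality of the nebentypus characters, is exactly the mechanism behind the Iwaniec--Li asymptotic large sieve (Lemma \ref{lem:ALS}) that the paper invokes, and your identification of the $c=q$ (equivalently $t=1$) term with trivial ``Kloosterman'' sum $e((m+n)/q)$ as the source of non-orthogonality is the right diagnosis. Two problems remain, however. First, your claim that the moduli $c=qc'$ with $c'\ge 2$ are dispatched by Weil bounds and the Bessel decay is asserted rather than proved, and it is not how things actually play out: in the paper every dyadic range $t\sim T_1$ of the moduli requires the full off-diagonal analysis of Sections 4--6, with the genuinely delicate regimes governed by the size of $T_1$ relative to $\sqrt{H_1Y/q}$ rather than by $t=1$ versus $t\ge 2$.

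Second, and more seriously, at the one point where a new idea is required you do not supply one. ``Apply Voronoi, expose a hidden diagonal, reinterpret via a Kuznetsov-type formula on the dual side, iterate until the remainder is controlled by a sixth-moment estimate'' names tools but derives no bound; in particular the proposed reduction of an eighth moment to a sixth moment would normally cost a factor of the sup-norm of an individual $L$-value and could not yield a Lindel\"of-quality bound without further input, and you yourself concede that the transformations might merely return an equally difficult dual sum. The paper's actual resolution is concrete and quite different: Voronoi summation is applied to the divisor function $\sigma(n_2)$ occurring in the coefficients (not to the additive twist $e(m/q)$), producing $Y_0$ and $K_0$ Bessel transforms; Poisson summation in the $h$-variable of the large sieve then localizes a dual frequency $\beta$; a stationary-phase analysis confines the surviving terms to a thin region $|\ell'n-\ell n'|\ll q^{\epsilon_1/2}\frac{N}{g}\frac{NH_1bc}{qd^2g^2g_1}R$; and after a Taylor expansion to separate variables the count is finished with the classical additive-character large sieve (Lemma \ref{lem:largesieve}). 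None of this is present in, or recoverable from, your sketch, so the argument as written does not establish the theorem.
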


Our methods may extend to help prove an asymptotic on a comparable eighth moment, perhaps involving an extra small average over the vertical line.  We hope to return to this in the future.


\section{Approximate functional equation and initial setup} \label{sec:approx}

The first step is to express $|L(f, 1/2)|^8$ in terms of an approximate functional equation.  To this end, we introduce the following notations and lemmas.

 The $k$-divisor function is defined by
 \es{\label{def:sigma_k} \sigma_k(n) = \sum_{n_1n_2...n_k = n} 1 . }
 When $k = 2$, we write $\sigma(n)$ instead of $\sigma_2(n)$.  $\sigma$ is a multiplicative function, but is  not completely multiplicative.  The following Lemma records the well known multiplicative relation for $\sigma$.

 \begin{lemma} \label{lem:multofsigma_k}
 	We have
 	\est{\sigma(n_1n_2) = \sum_{d| (n_1, n_2)} \mu(d) \sigma \pr{\frac{n_1}{d}}\sigma\pr{\frac{n_2}{d}}.}
 \end{lemma}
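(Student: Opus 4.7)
The plan is to exploit the multiplicativity of both sides of the claimed identity and reduce everything to the case of prime powers. Observe that if we factor $n_1 = \prod_p p^{a_p}$ and $n_2 = \prod_p p^{b_p}$, then $(n_1, n_2) = \prod_p p^{\min(a_p, b_p)}$, and the sum on the right-hand side factors as a product over primes because $\mu$ is multiplicative and $\sigma$ (the divisor function $d$) is multiplicative. The left-hand side $\sigma(n_1 n_2)$ is likewise multiplicative in this coordinate-wise sense. Hence it suffices to verify the identity for $n_1 = p^a$ and $n_2 = p^b$ with a single prime $p$.

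In the prime-power case, the divisors $d$ of $(p^a, p^b)$ are $1, p, p^2, \dots, p^{\min(a,b)}$, but $\mu(p^j) = 0$ for $j \geq 2$, so only $j=0$ and $j=1$ contribute. Writing out the right-hand side gives
\est{
\sigma(p^a)\sigma(p^b) - \sigma(p^{a-1})\sigma(p^{b-1}) = (a+1)(b+1) - ab = a + b + 1,
}
while the left-hand side is $\sigma(p^{a+b}) = a+b+1$. The two agree, which closes the argument after reassembling the prime-by-prime identity into the global one.

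There is no real obstacle here; the only small point that needs a line of justification is that the right-hand side truly splits as a product over primes, which follows from the fact that $d \mid (n_1, n_2)$ forces $d$ to factor as $\prod_p p^{e_p}$ with $e_p \leq \min(a_p, b_p)$, so that the sum disentangles into independent sums over $e_p$ for each $p$. An alternative, if one prefers a more direct route, is to prove the identity by Dirichlet-series manipulation: one checks that
\est{
\sum_{n_1, n_2 \geq 1} \frac{\sigma(n_1 n_2)}{n_1^s n_2^w} = \zeta(s)\zeta(w)\zeta(s+w)\sum\cdots
}
and matches Euler factors; however, the multiplicativity reduction is cleaner and is the approach I would present.
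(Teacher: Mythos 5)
Your proof is correct: the paper states this lemma without proof as a well-known identity, and your reduction to prime powers via the multiplicativity of both sides, followed by the check $(a+1)(b+1)-ab=a+b+1=\sigma(p^{a+b})$, is exactly the standard verification (note only that when $\min(a,b)=0$ the $d=p$ term is simply absent, which is consistent since $ab=0$ there). Nothing further is needed.
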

 
 Let 
 \es{ \label{eqn:defB}\mathscr A(b, c; d, j) := \mu(b)\mu(c) \sigma \pr{\frac{d[b, c]}{b}}\sigma \pr{\frac{d[b, c]}{c}} \sigma(j).}  Now we write $L^4(f, s)$ in terms of its Dirichlet series.

 \begin{lem} \label{eqn:prod3Lfnc} Let $L(f, s)$ be an $L$-function in $\mathcal H_\chi.$  For $\tRe(s) > 1$, we have
 	\est{ L^4\pr{f, s} &= \sum_{j \geq 1} \frac{\chi(j) \sigma(j)}{j^{2s}} \sum_{d \geq 1} \frac{\chi(d)}{d^{2s}} \sum_{n \geq 1} \frac{\lambda_f(n)}{n^s} \sum_{n_1n_2 = u} \sigma(dn_1) \sigma(dn_2) \\
 		&=  \sumfour_{b, c, d, j \geq 1} \frac{\chi(j[b, c]d)\mathscr A(b, c; d, j)}{(bc)^{s} [b, c]^{2s}d^{2s}j^{2s}}   \sum_{n \geq 1} \frac{\lambda_f(bcn) \sigma_4(n)}{n^{s}}.}
 \end{lem}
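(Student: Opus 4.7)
The strategy is to write $L^4(f,s) = L^2(f,s) \cdot L^2(f,s)$ and apply the Hecke relation \eqref{eqn:Heckerelation} iteratively, then use Lemma \ref{lem:multofsigma_k} to split the $\sigma$-factors that appear.

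First I would compute $L^2(f,s)$. Writing $L^2(f,s) = \sum_{m,n} \lambda_f(m)\lambda_f(n)(mn)^{-s}$ and applying \eqref{eqn:Heckerelation}, then parameterizing $m = dm'$, $n = dn'$ for each $d \mid (m,n)$, yields
\est{L^2(f,s) = \sum_{d \geq 1} \frac{\chi(d)}{d^{2s}} \sum_{u \geq 1} \frac{\lambda_f(u)\sigma(u)}{u^s},}
since $\sum_{m'n' = u} 1 = \sigma(u)$.

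Next, squaring this and applying \eqref{eqn:Heckerelation} again to $\lambda_f(n_1)\lambda_f(n_2)$ inside the product, and then summing the pair $(d_1, d_2)$ with fixed product $j = d_1 d_2$ (which gives $\sigma(j)$ many representations), I would obtain
\est{L^4(f,s) = \sum_{j \geq 1} \frac{\chi(j)\sigma(j)}{j^{2s}} \sum_{d \geq 1} \frac{\chi(d)}{d^{2s}} \sum_{n \geq 1} \frac{\lambda_f(n)}{n^s} \sum_{n_1 n_2 = n} \sigma(dn_1)\sigma(dn_2),}
which is the first asserted identity.

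For the second identity, I apply Lemma \ref{lem:multofsigma_k} separately to each of $\sigma(dn_1)$ and $\sigma(dn_2)$, introducing $b \mid (d, n_1)$ and $c \mid (d, n_2)$. The conditions $b \mid d$ and $c \mid d$ together force $[b,c] \mid d$, so after swapping the order of summation I change variables $d \mapsto [b,c] d$. Similarly, the conditions $b \mid n_1$ and $c \mid n_2$ in the inner sum force $bc \mid n$, and with the substitutions $n_1 = b u$, $n_2 = c v$ the inner double sum over $n_1 n_2 = n$ collapses to
\est{\sum_{uv = n/(bc)} \sigma(u)\sigma(v) = \sigma_4(n/(bc)),}
by the definition \eqref{def:sigma_k}. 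Writing $n = bc n'$ absorbs the factor $(bc)^s$ into the denominator, and the resulting $\sigma$-factors from the two applications of Lemma \ref{lem:multofsigma_k} combine with the $\mu(b)\mu(c)\sigma(j)$ into precisely $\mathscr A(b,c;d,j)$ as defined in \eqref{eqn:defB}. Collecting exponents of $s$ gives the stated formula.

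The computation is essentially bookkeeping. The only step that requires a little care is the change of variables $d \mapsto [b,c]d$ together with the simultaneous reindexing $n \mapsto bcn$, since the $\sigma$-arguments $d[b,c]/b$ and $d[b,c]/c$ in $\mathscr A$ arise from the old variable before rescaling. Once one tracks these two substitutions consistently, both factors match up and absolute convergence for $\tRe(s) > 1$ (which follows from the Ramanujan--Petersson bound $|\lambda_f(n)| \leq \sigma(n)$ together with polynomial growth of $\sigma_4$) justifies all the rearrangements. No serious analytic obstacle arises; this lemma is purely an algebraic identity of Dirichlet series designed to set up the subsequent moment computation.
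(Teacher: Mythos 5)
Your proposal is correct and follows essentially the same route as the paper: compute $L^2(f,s)$ via the Hecke relation, square and collect $j = d_1 d_2$ with multiplicity $\sigma(j)$, apply the Hecke relation again to get the first identity, then split $\sigma(dn_1)\sigma(dn_2)$ with Lemma \ref{lem:multofsigma_k} and substitute $d \mapsto d[b,c]$, $n_1 \mapsto bn_1$, $n_2 \mapsto cn_2$. The bookkeeping you describe, including the identification $\sum_{uv=m}\sigma(u)\sigma(v) = \sigma_4(m)$ and the assembly of $\mathscr A(b,c;d,j)$, matches the paper's argument.
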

 
 \begin{proof}
 	
 	From the Hecke relation (\ref{eqn:Heckerelation}), we have
 	\est{&L^2\pr{f, s} = \sum_{j \geq 1} \frac{\chi(j)}{j^{2s}}  \sum_{n\geq 1} \frac{\lambda_f \left(n\right) \sigma(n)}{n^{ s}}.}
 	Then by the Hecke relation, we obtain
 	\est{ 
 		L^4\pr{f, s} &= \sum_{j \geq 1} \frac{\chi(j) \sigma(j)}{j^{2s}}  \sumtwo_{n_1, n_2\geq 1} \frac{\lambda_f \left(n_1\right) \lambda_f(n_2) \sigma(n_1) \sigma(n_2)}{n_1^{ s}n_2^s} \\
 		&= \sum_{j \geq 1} \frac{\chi(j) \sigma(j)}{j^{2s}} \sum_{d \geq 1} \frac{\chi(d)}{d^{2s}} \sum_{u \geq 1} \frac{\lambda_f(n)}{n^s} \sum_{n_1n_2 = n} \sigma(dn_1) \sigma(dn_2).}
 	From Lemma \ref{lem:multofsigma_k}, we have
 	\est{&\sum_{d \geq 1} \frac{\chi(d)}{d^{2s}} \sum_{n \geq 1} \frac{\lambda_f(n)}{n^s} \sum_{n_1n_2 = n} \sigma(dn_1) \sigma(dn_2) \\
 		&= \sum_{d \geq 1} \frac{\chi(d)}{d^{2s}} \sumtwo_{n_1, n_2 \geq 1} \frac{\lambda_f(n_1n_2)}{n_1^s n_2^s} \sumtwo_{b| (n_1, d), \  c | (n_2, d)} \mu(b) \mu(c)\sigma\pr{\frac{n_1}{b}} \sigma\pr{\frac{d}{b}}\sigma\pr{\frac{n_2}{c}}\sigma\pr{\frac{d}{c}} \\
 		&= \sumtwo_{b, c \geq 1} \mu(b)\mu(c) \sum_{\substack{d \geq 1 \\ [b, c] | d}}  \frac{\chi(d)}{d^{2s}} \sumtwo_{\substack{n_1, n_2 \geq 1 \\ b | n_1, c | n_2}} \frac{\lambda_f(n_1n_2)}{n_1^s n_2^s} \sigma\pr{\frac{n_1}{b}} \sigma\pr{\frac{d}{b}}\sigma\pr{\frac{n_2}{c}}\sigma\pr{\frac{d}{c}} \\
 	}
 	By changing $n_1$ to $bn_1$, $n_2$ to $cn_2$ and $d$ to $d[b,c]$, we derive the lemma.
 \end{proof}

 Let $H(s)$ be an even holomorphic function bounded in the region $|\textrm{Re}(s)| < 3$ with $H(0) = 1$. We now prove an approximate functional equation for $L^4(f, 1/2)$.

 \begin{lemma} \label{lem:approxFncLmodular} For any $\xi> 0,$ let
 	\est{\mathcal V(\xi) = \frac{1}{2\pi i} \int_{(1)} H^4\pr{s} \frac{\Gamma^4\pr{\tfrac k2 + s}}{\Gamma^4\pr{\tfrac k2}} \xi^{-s} \ \frac{ds}{s}.}
 	Then  
 	\es  {\label{eqn:approxfcneq} &L(f, 1/2)^4 = \summany_{b, c, d, j, n \geq 1} \frac{\chi(j[b, c]d)\mathscr A(b, c; d, j) \lambda_f(bcn) \sigma_4(n) }{j[b, c]d \sqrt{bcn} }  \mathcal V\pr{\frac{(2\pi)^4 (j[b, c] d)^2 bcn}{q^2}}\\
 		& \ \ \ \ + (i^k \bar \eta_f)^4\summany_{b, c, d, j, n  \geq 1} \frac{\overline{\chi}(j[b, c]d)\mathscr A(b, c; d, j) \overline{\lambda_f}(bcn) \sigma_4(n)}{ j[b, c]d \sqrt{bcn}}  \mathcal V\pr{\frac{(2\pi)^4 (j [b, c] d)^2 bcn}{q^2}}.}
 \end{lemma}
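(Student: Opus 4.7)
The plan is to derive the approximate functional equation by the standard contour-shift method applied to the completed $L$-function $\Lambda(f,\cdot)^4$, and then translate the resulting Mellin integrals into the stated shape by feeding in the Dirichlet series expansion of Lemma \ref{eqn:prod3Lfnc}.

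Concretely, I start from
\[
I := \frac{1}{2\pi i}\int_{(1)} \Lambda(f, 1/2+s)^4 H^4(s)\,\frac{ds}{s}.
\]
Because $\Gamma(s+k/2)^4$ decays super-exponentially on vertical lines and $H$ is bounded in $|\tRe(s)|<3$, I may shift the contour from $\tRe(s)=1$ to $\tRe(s)=-1$, crossing only the simple pole of $1/s$ at $s=0$. The residue is $H(0)^4 \Lambda(f,1/2)^4 = \Lambda(f,1/2)^4$. In the shifted integral I substitute $s\mapsto -s$, use that $H$ is even so $H^4(-s)=H^4(s)$, and apply the functional equation \eqref{eqn:fnceq} in the form $\Lambda(f,1/2-s)=i^k\bar\eta_f\,\Lambda(\bar f,1/2+s)$. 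This gives
\[
\Lambda(f,1/2)^4 \;=\; I \;+\; (i^k\bar\eta_f)^4\cdot \frac{1}{2\pi i}\int_{(1)} \Lambda(\bar f,1/2+s)^4 H^4(s)\,\frac{ds}{s}.
\]

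Next I divide both sides by $(q/(4\pi^2))\,\Gamma(k/2)^4$, which is the value at $s=0$ of the archimedean and conductor factors of $\Lambda(f,1/2+s)^4$ obtained from \eqref{def:completedLfnc}. This replaces each $\Lambda^4$ by $L^4$ and places the kernel $(q/(4\pi^2))^{2s}\,\Gamma(s+k/2)^4/\Gamma(k/2)^4\cdot H^4(s)/s$ inside both integrals. Feeding the Dirichlet expansion from Lemma \ref{eqn:prod3Lfnc} into $L(f,1/2+s)^4$ gives, for each term, the scalar factor $\chi(j[b,c]d)\mathscr A(b,c;d,j)\lambda_f(bcn)\sigma_4(n)/(\sqrt{bcn}\,j[b,c]d)$ times $((j[b,c]d)^2 bcn)^{-s}$. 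Collecting these with $(q/(4\pi^2))^{2s}$ and using $(4\pi^2)^2=(2\pi)^4$ turns the contour integral into exactly $\mathcal V\!\left((2\pi)^4(j[b,c]d)^2 bcn/q^2\right)$. The dual term is handled identically, using that the Dirichlet coefficients of $L(\bar f,1/2+s)^4$ are the complex conjugates of those of $L(f,1/2+s)^4$ and that $\bar f$ has nebentypus $\bar\chi$.

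The only genuine bookkeeping step is to justify both the contour shift and the termwise interchange of the summation with the $s$-integral. Both follow at once from the super-exponential decay of $\Gamma(s+k/2)^4$ on vertical lines combined with the absolute convergence of the Dirichlet series for $L^4(f,1/2+s)$ on $\tRe(s)=1$; there is no genuine obstacle here, so I view this lemma as a routine but necessary setup for the later analysis.
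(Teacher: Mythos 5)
Your proposal is correct and follows essentially the same route as the paper: shift the contour of $\int \Lambda(f,1/2+s)^4 H^4(s)\,ds/s$ past the pole at $s=0$, reflect with the functional equation and the evenness of $H$, divide by $(q/(4\pi^2))\Gamma(k/2)^4$, and insert the Dirichlet expansion of Lemma \ref{eqn:prod3Lfnc} to produce the $\mathcal V$ weights. The bookkeeping with $(q/(4\pi^2))^{2s}((j[b,c]d)^2bcn)^{-s}$ matches the stated argument of $\mathcal V$, so nothing is missing.
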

 
 \begin{proof} We start by writing
 	$$\mathcal J(f) = \frac{1}{2\pi i } \int_{(2)} \Lambda\pr{f, \tfrac 12 + s}^4 H^4(s) \> \frac{ds}{s}, $$
 	where we recall that $\Lambda(f,s)$ is defined in (\ref{def:completedLfnc}). Moving the contour integral to $(-2)$ and using the functional equation (\ref{eqn:fnceq}) and $H(-s) = H(s)$, we obtain that 
 	\est{\mathcal J(f) &= \Lambda\pr{f, \tfrac 12}^4  + \frac{1}{2\pi i}\int_{(-2)}\Lambda\pr{f, \tfrac 12 + s}^4  H^4(s) \> \frac{ds}{s} \\
 		&= \Lambda\pr{f, \tfrac 12}^4  - \frac{1}{2\pi i}\int_{(2)}\Lambda\pr{f, \tfrac 12 - s}^4  H^4(-s)  \> \frac{ds}{s} \\
 		&= \Lambda\pr{f, \tfrac 12}^4 - (i^k \bar \eta_f)^4 \mathcal J(\bar f).}
 	By \eqref{def:completedLfnc} and Lemma \ref{eqn:prod3Lfnc}, we have 
 	\est{\mathcal J(f) = \pr{\frac{q}{4\pi^2}}\Gamma^4\pr{\frac k2}\summany_{b, c, d, j, n \geq 1} \frac{\chi(j[b, c]d)\mathscr A(b, c; d, j) \lambda_f(bcn) \sigma_4(n) }{j[b, c]d \sqrt{bcn} }  \mathcal V\pr{\frac{(2\pi)^4 (j[b, c] d)^2 bcn}{q^2}},} and a similar expression holds for $\mathcal J(\bar f)$. 	Using the expression for $\Lambda(f, 1/2)$ from Equation (\ref{def:completedLfnc}) completes the proof of the Lemma.
 	
 \end{proof}

By Stirling's formula, for any fixed $ A > 0$ and real number $x > 0$
$$ \mathcal V(x) \ll \frac{1}{(x + 1)^A}   $$
and 
$$ \mathcal V(x) = 1 + O(x^A)   \ \ \ \textrm{for}  \ \ \ x \rightarrow 0.$$  
This implies that the main contribution of $L(f,1/2)^4$ comes from when $(j[b, c]d)^2 ghn \ll q^{2 + \epsilon_0}.$  
Let $\Psi$ be supported in the interval $[1,2]$ such that  $x^\nu \Psi^{(\nu)}(x) \ll 1$ for all $\nu \leq 0$, and let $\Psi_1(x) = \Psi(x)\mathcal V\left( \frac{x X}{q^{2 + \epsilon}}\right)$. Hence $\Psi_1(x)$ has the same properties as $\Psi(x)$ for all $X \leq q^{2 + \epsilon}.$  By Lemma \ref{lem:approxFncLmodular}, it is enough to bound 
\est{\frac{2}{\phi(q)}\sum_{\substack{\chi \mod q \\ \chi(-1) = (-1)^k}} \sumh_{f \in \mathcal H_\chi} |\mathcal T_f(X)|^2,}
where 
\est{\mathcal T_f(X) = \summany_{\substack{b, c, d, j, n \geq 1 \\ (j[b, c]d)^2 bcn \ll q^{2 + \epsilon}}} \frac{\chi(j[b, c]d)\mathscr A(b, c; d, j) \lambda_f(bcn) \sigma_4(n) }{j[b, c]d \sqrt{bcn}}  \Psi_1\pr{\frac{(2\pi)^4 (j[b, c] d)^2 bcn}{X}}.}
Applying Cauchy-Schwarz inequality and writing $[b, c] = bc/(b, c)$, we obtain that
\est{|\mathcal T_f(X)|^2 \leq & \summany_{\substack{b, c, d, j \geq 1 \\ j [g, h]d \leq \sqrt X}} \frac{(b, c) \sigma_2^2(j)\sigma^2_2\left( \frac{d[b, c]}{b} \right)\sigma^2_2\left( \frac{d[b, c]}{c} \right)}{jdbc} \summany_{ \substack{b, c, d, j \geq 1 \\ j [b, c]d \leq \sqrt X}} \frac{(b, c)}{jdbc} \\
	& \hskip 1in \times \left|\sum_{n \geq 1} \frac{\lambda_f(bcn) \sigma_4(n)}{\sqrt{bcn} } \Psi_1\pr{\frac{(2\pi)^4 (j[b, c] d)^2 bcn}{X}} \right|^2}
Since  
\est{\summany_{\substack{b, c, d, j \geq 1 \\ j [b, c]d \leq \sqrt X}} \frac{(b, c) \sigma_2^2(j)\sigma^2_2\left( \frac{d[b, c]}{b} \right)\sigma^2_2\left( \frac{d[b, c]}{c} \right)}{jdbc} \ll X^\epsilon \ll q^\epsilon,}
and 
\est{\summany_{ \substack{b, c, d, j \geq 1 \\ j [b, c]d \leq \sqrt X}} \frac{(b, c)}{jdbc} \ll q^\epsilon,}
to prove Theorem \ref{mainthm:bound}, it suffices to show the following Proposition.

\begin{prop}\label{prop:mainbound} Let $b, c, n$ be defined as above, and $$a_{bcn} = \frac{\sigma_4(n) }{\sqrt{bcn}} \Psi_1 \left( \frac{bcn}{Y}\right),$$
where $Y = \frac{X}{16\pi^4 (j [b, c]d)^2 }$.
 Then we have
\est{  \frac{2}{\phi(q)}\sum_{\substack{\chi \mod q \\ \chi(-1) = -1}} \sumh_{f \in \mathcal H_\chi} \left|\sum_{n \geq 1} a_{bcn}\lambda_f(bcn)  \right|^2\ll q^{\epsilon}.}
Note that the implied constant depends on $\epsilon.$
\end{prop}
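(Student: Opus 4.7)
The plan is to open the absolute square in Proposition \ref{prop:mainbound} and apply the Petersson trace formula to the inner modular average. Writing $N = bc$ for brevity, the mean square becomes
\est{\frac{2}{\phi(q)}\sum_{\substack{\chi \bmod q\\ \chi(-1)=-1}} \sum_{m, n \geq 1} a_{Nm}\overline{a_{Nn}}\sumh_{f \in \mathcal H_\chi}\overline{\lambda_f(Nm)}\lambda_f(Nn),}
and Petersson rewrites the inner sum as $\delta_{m=n}$ plus $2\pi i^{-k}\sum_{\ell \geq 1}\frac{S_\chi(Nm, Nn; \ell q)}{\ell q}J_{k-1}\bigl(\frac{4\pi N\sqrt{mn}}{\ell q}\bigr)$. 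Averaging the twisted Kloosterman sum over $\chi$ with $\chi(-1) = -1$ collapses it into classical Kloosterman sums whose inner variable is restricted to residues $\pm 1 \bmod q$.

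The diagonal $m = n$ is straightforward: $\sum_n |a_{Nn}|^2 \ll N^{-1}\sum_{n \ll Y/N}\sigma_4(n)^2/n \ll q^{\epsilon}/N$ by the standard bound $\sum_{n \leq M}\sigma_4(n)^2/n \ll (\log M)^{16}$, and the outer factors in $b, c, d, j$ from the Cauchy--Schwarz step are absorbed into the final $q^\epsilon$. For the off-diagonal, the Bessel weight $J_{k-1}(x) \asymp x^{k-1}$ for small $x$ restricts the effective range to $\ell \ll q^{1+\epsilon}/N$. When $\ell \geq 2$, the modulus $\ell q$ is strictly larger than the level, and a direct application of Weil's bound $|S(m,n;c)| \ll (m,n,c)^{1/2} c^{1/2+\epsilon}$ combined with the Bessel decay already reduces this contribution to $q^\epsilon$ after summation over $m, n, \ell$. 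The delicate case is $\ell = 1$, where the Kloosterman modulus equals $q$ itself --- precisely the regime in which Iwaniec and Li \cite{IL} observed that the family is not perfectly orthogonal.

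The hardest step, and where the new idea announced in the introduction must enter, is controlling this $\ell = 1$ contribution. A naive approach --- opening the Kloosterman sum additively and applying Poisson summation in $m$ and $n$ --- produces a dual sum containing a ``main term'' too large to absorb by any general large-sieve inequality, reflecting precisely the extra structure identified in \cite{IL}. The plan is to isolate this main term explicitly, reinterpret it as a shifted convolution sum for $\sigma_4 \ast \sigma_4$ twisted by an oscillatory Bessel kernel, and then attack it by a spectral (Kuznetsov-type) decomposition or a delta-method, exploiting the concrete arithmetic form $a_{Nn} = \sigma_4(n)/\sqrt{Nn}$ rather than treating the coefficients as arbitrary. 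Maintaining uniformity in the auxiliary parameters $b, c, d, j$ so that the savings survive the outer Cauchy--Schwarz step is the principal technical obstacle.
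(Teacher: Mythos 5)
There is a genuine gap here, in two places. First, your claim that for $\ell \geq 2$ ``a direct application of Weil's bound combined with the Bessel decay already reduces this contribution to $q^\epsilon$'' does not survive a size check. With $Nm, Nn \ll Y \ll q^{2+\epsilon}$ the Bessel argument $4\pi N\sqrt{mn}/(\ell q)$ is of size roughly $q/\ell$, so $J_{k-1}$ contributes only the oscillatory bound $\ll (\ell/q)^{1/2}$ with no further decay until $\ell$ is of size $q$; combining this with $|S(\cdot,\cdot;\ell q)| \ll (\ell q)^{1/2+\epsilon}$ (or with the trivial bound $\ll \ell$ for the $\chi$-averaged, residue-restricted sum) and summing trivially over $m,n \ll Y/N$ and $\ell \ll q^{1+\epsilon}$ leaves a contribution that is a positive power of $q$ too large. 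The entire difficulty of this family is that no pointwise bound on the Kloosterman--Bessel kernel suffices for \emph{any} small $\ell$, not just $\ell = 1$; one must extract cancellation from the sums over $m$ and $n$. Second, for the part you correctly identify as hard, you offer only a plan (``attack it by a spectral decomposition or a delta-method''), not an argument, so the proposal does not contain a proof of the proposition.

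For comparison, the paper does not return to the Petersson formula at all. It invokes the Iwaniec--Li asymptotic large sieve (Lemma \ref{lem:ALS}), which already performs the $\chi$-average and isolates the non-orthogonal part as the explicit quantity $\frac{1}{q}\sum_{t \leq T}(2\pi/t)^2\sum_{h \leq H}|P_{ht}(\alpha)|^2$ with an admissible error term. The real work is then to bound these $P_{ht}$ sums by exploiting the specific coefficients $\sigma_4(n)$: one opens $\sigma_4 = \sigma * \sigma$, applies Voronoi summation for the divisor function to produce $Y_0$ and $K_0$ kernels, performs Poisson summation in $h$, uses stationary phase to localize the surviving frequencies, and finishes with the classical $GL(1)$ large sieve (Lemma \ref{lem:largesieve}). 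Any correct completion of your outline would have to supply an argument of comparable depth for the small-$\ell$ off-diagonal.
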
  

\section{Preliminary lemmas}\label{sec:prelemma}

To deal with the summation in Proposition \ref{prop:mainbound}, we first apply the large sieve developed by Iwaniec and Xiaoqing Li \cite{IL} which we record below.

\begin{lem}[Asymptotic large sieve] \label{lem:ALS} Let $q$ be a prime number, $N \geq q$, $T = Nq^{-1}$ and $1 \leq H \leq T.$ Then for any complex vectors $\alpha = (a_n)$ with $N < n \leq 2N$ we have
	\est{\frac{2}{\phi(q)}\sum_{\substack{\chi \mod q \\ \chi(-1) = (-1)^k}} \sumh_{f \in \mathcal H_\chi} \left|\sum_{n \geq 1} a_n \lambda_f(n) \right|^2 = \frac{1}{q} \sum_{\substack{1 \leq t \leq T \\ (t, q) = 1}} \left( \frac{2\pi}{t}\right)^2 \sum_{1 \leq h \leq H} |P_{ht}(\alpha)|^2 + O \pr{N^\epsilon \pr{\frac N{q^2} + \sqrt{\frac{N}{qH}} }} \| \alpha \|^2}
with any $\epsilon > 0$,
$$ P_{ht}(\alpha) =  \sum_n a_n S(h\bar q, n ; t) J_{k - 1} \pr{\frac{4\pi }{t} \sqrt{\frac{hn}{q}}},$$
$S(m, n, c)$ is the Kloosterman sum defined by 
$$ S(m, n; c) = \sum_{a\bar a \equiv 1 \mod c} \e{\frac{am + \bar a n}{c}},$$
and $J_{k - 1}(x)$ is the $J$-Bessel function of order $k - 1.$ Moreover, the imiplied constant depends on $k$ and $\epsilon.$	
\end{lem}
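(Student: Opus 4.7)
The approach is to derive this asymptotic large sieve from Petersson's trace formula combined with character orthogonality modulo $q$, following the ideas of Iwaniec and Li.

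First, I would open the square to obtain
$$\sumh_{f \in \mathcal H_\chi}\left|\sum_n a_n \lambda_f(n)\right|^2 = \sumtwo_{m,n}a_m\overline{a_n}\sumh_{f \in \mathcal H_\chi}\lambda_f(m)\overline{\lambda_f(n)}.$$
Petersson's trace formula for $\Gamma_0(q)$ with nebentypus $\chi$ yields
$$\sumh_{f \in \mathcal H_\chi}\lambda_f(m)\overline{\lambda_f(n)} = \delta_{m=n} + 2\pi i^{-k}\sum_{q\mid c}\frac{S_\chi(m,n;c)}{c}J_{k-1}\!\left(\frac{4\pi\sqrt{mn}}{c}\right),$$
with twisted Kloosterman sum $S_\chi(m,n;c) = \sum_{(a,c)=1}\chi(a)\e{\frac{am+\bar a n}{c}}$. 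The diagonal $\delta_{m=n}$ contributes $\|\alpha\|^2$, which is absorbed into the error term.

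Second, I would average over characters with $\chi(-1)=(-1)^k$. Writing $c = qt$ and decomposing $a \bmod qt$ by CRT when $(t,q)=1$ (the case $q\mid t$ forces $q^2\mid c$ and is easily checked to be lower order), character orthogonality converts $\chi(a)$ into a congruence condition on the $q$-component of $a$. Expanding this congruence by opening the $\e{\cdot/q}$-factor in Fourier modes $h\bmod q$ disentangles the $q$- and $t$-parts of $S_\chi(m,n;qt)$: the $t$-variable piece becomes precisely the untwisted Kloosterman sum $S(h\bar q, n; t)$, and the Bessel argument reduces to $\tfrac{4\pi}{t}\sqrt{\tfrac{hn}{q}}$. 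Reassembling $a_m\overline{a_n}$ with the symmetric $h,t$-weights produces the perfect-square shape $|P_{ht}(\alpha)|^2$.

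Third, I would isolate the main term as the contribution from $t\leq T$ and $h\leq H$, giving
$$\frac{1}{q}\sum_{\substack{1\leq t\leq T\\ (t,q)=1}}\pr{\frac{2\pi}{t}}^2\sum_{h\leq H}|P_{ht}(\alpha)|^2.$$
This truncation is natural since $J_{k-1}(x)\ll \min(x^{k-1},x^{-1/2})$ and $N = Tq$, so the effective support of the Bessel function limits $t$ to $t\leq T$, while the $H$-cutoff is a free truncation parameter retained in the main term. The main obstacle is bounding the two tails sharply. For $t > T$, the Bessel decay $J_{k-1}(x)\ll x^{-1/2}$ combined with the Weil bound $|S(m,n;c)|\leq\tau(c)(m,n,c)^{1/2}c^{1/2}$ and summation over this long $c$-range contributes the error $N^\epsilon\cdot N/q^2$. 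For $h > H$, applying Cauchy--Schwarz in $n$ together with second-moment estimates for Kloosterman sums produces the $N^\epsilon\sqrt{N/(qH)}$ term. The delicate heart of the Iwaniec--Li argument is ensuring that, after character averaging and the Fourier expansion in $h$, the extracted piece is genuinely a perfect-square expression $|P_{ht}(\alpha)|^2$ rather than a bilinear cross term; this requires careful bookkeeping of the symmetry $m \leftrightarrow n$ throughout both the CRT decomposition and the simultaneous expansion in $h$, and it is precisely this algebraic structure that makes the large sieve non-orthogonal in the way emphasized in the introduction.
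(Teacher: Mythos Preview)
The paper does not prove this lemma at all: it is quoted verbatim from Iwaniec and Li \cite{IL} as a preliminary result, with no argument supplied. So there is no ``paper's own proof'' to compare against; you have written a proof sketch where the authors simply cite the literature.

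That said, your sketch has the right skeleton but misidentifies the key mechanism. After Petersson and character orthogonality, the off-diagonal is a sum over $m,n$ and moduli $c=qt$ with Bessel argument $\tfrac{4\pi\sqrt{mn}}{qt}$. You claim the new variable $h$ arises by ``opening the $e(\cdot/q)$-factor in Fourier modes $h\bmod q$,'' and that the Bessel argument then collapses to $\tfrac{4\pi}{t}\sqrt{hn/q}$. That is not how it works: a Fourier expansion of a congruence cannot change the argument of the Bessel function. In the Iwaniec--Li proof the dual variable $h$ is produced by \emph{Poisson summation} in one of the original variables (say $m$), which transforms the Bessel kernel and replaces $\sqrt{mn}/(qt)$ by $\sqrt{hn/q}/t$ on the dual side; the Kloosterman sum $S(h\bar q,n;t)$ then emerges from the combination of the residual exponentials after Poisson with the CRT-split Kloosterman data. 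Your account skips this transform entirely, so as written the passage from $\tfrac{4\pi\sqrt{mn}}{qt}$ to $\tfrac{4\pi}{t}\sqrt{hn/q}$ is unjustified. The tail estimates you attribute to $t>T$ and $h>H$ are also not quite right in their provenance (in particular the $N/q^2$ term already contains the diagonal $\|\alpha\|^2$ contribution since $N\ge q$), though the final shape of the error is of course correct.
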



Next, we provide some useful properties of Bessel functions. 
\begin{lem} \label{lem:Besselresult}
	We have
	\es{\label{asympJxbig} J_{k-1} (2\pi x) = \frac{1}{2\pi\sqrt x}\pg{ W(2\pi x)\e{x - \frac k4 + \frac 18}  +  \overline{W}(2\pi x)\e{-x + \frac k4 - \frac 18}},}
	where $W^{(j)}(x) \ll_{j, k} x^{-j} $.  Moreover, 
	\es{\label{asympJxSm} J_{k-1} (2 x) = \sum_{\ell = 0}^{\infty} (-1)^{\ell} \frac{x^{2\ell + k - 1}}{\ell! (\ell + k - 1)!}.}

\end{lem}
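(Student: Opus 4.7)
Both formulas in Lemma \ref{lem:Besselresult} are classical facts about the Bessel function $J_{k-1}$; I would verify each by specializing standard expansions and tracking constants carefully.

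For the power series identity \eqref{asympJxSm}, I would start from the defining series
\est{J_\nu(z) = \sum_{\ell = 0}^\infty \frac{(-1)^\ell}{\ell!\, \Gamma(\ell + \nu + 1)} \lr{\frac{z}{2}}^{2\ell + \nu}.}
Setting $\nu = k - 1$, so that $\Gamma(\ell + k) = (\ell + k - 1)!$ since $k$ is a positive integer, and substituting $z = 2x$, the factor $(z/2)^{2\ell + \nu}$ collapses to $x^{2\ell + k - 1}$ and one reads off \eqref{asympJxSm} immediately.

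For the large-argument asymptotic \eqref{asympJxbig}, I would invoke the Hankel asymptotic expansion
\est{J_\nu(z) = \sqrt{\frac{2}{\pi z}}\lr{P(\nu, z) \cos\lr{z - \tfrac{\nu\pi}{2} - \tfrac{\pi}{4}} - Q(\nu, z) \sin\lr{z - \tfrac{\nu\pi}{2} - \tfrac{\pi}{4}}},}
where $P(\nu, z)$ and $Q(\nu, z)$ admit complete asymptotic expansions in inverse powers of $z$ with $P = 1 + O(z^{-2})$ and $Q = O(z^{-1})$. Writing $\cos$ and $\sin$ via Euler's formula collects the oscillating factors into
\est{J_\nu(z) = \frac{1}{\sqrt{2\pi z}}\lr{(P - iQ)\, e^{i(z - \nu\pi/2 - \pi/4)} + (P + iQ)\, e^{-i(z - \nu\pi/2 - \pi/4)}}.}
Substituting $z = 2\pi x$ and $\nu = k - 1$ produces the prefactor $(2\pi\sqrt{x})^{-1}$, while the phase simplifies as
\est{2\pi x - (k-1)\tfrac{\pi}{2} - \tfrac{\pi}{4} \;=\; 2\pi\lr{x - \tfrac{k}{4} + \tfrac{1}{8}},}
matching the $e(\pm(x - k/4 + 1/8))$ factors in the statement. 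Defining $W(y) := P(k-1, y) - iQ(k-1, y)$ then yields \eqref{asympJxbig}, and the derivative estimate $W^{(j)}(y) \ll_{j,k} y^{-j}$ follows by differentiating the asymptotic series term by term, using the standard uniform estimates on the remainder of the Hankel expansion (see, e.g., Watson's \emph{Theory of Bessel Functions}, Ch.~VII).

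The only mild bookkeeping is ensuring that the phase shift $-k/4 + 1/8$ is reproduced exactly (rather than off by a half-integer) and that the normalization $(2\pi\sqrt{x})^{-1}$ comes out correctly from the $\sqrt{2/(\pi z)}$ prefactor at $z = 2\pi x$; no substantive analytic input is needed beyond invoking the classical Hankel expansion.
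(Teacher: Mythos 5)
The paper offers no proof of this lemma at all: it simply states that the results are standard and cites Watson's treatise. Your verification—reading \eqref{asympJxSm} off the defining power series of $J_\nu$ and deriving \eqref{asympJxbig} from the Hankel asymptotic expansion, with the prefactor $(2\pi\sqrt{x})^{-1}$ and the phase $2\pi x-(k-1)\tfrac{\pi}{2}-\tfrac{\pi}{4}=2\pi\lr{x-\tfrac k4+\tfrac18}$ checked correctly—is precisely the standard derivation of the cited facts, so it is essentially the same (referential) approach made explicit; the only quibble is the sign convention $W=P\mp iQ$ (Euler's formula actually pairs $P+iQ$ with $e^{i\omega}$), which is immaterial since the lemma only asserts the existence of some $W$ with $W^{(j)}(x)\ll_{j,k}x^{-j}$.
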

These results are standard and we refer the reader to \cite{Watt} for these claims. Other than $J$-Bessel functions, we will also need to use some properties of a Bessel function of the second kind $Y_0(x)$ and a modified Bessel function of the second kind $K_0(x)$. These appear due to an application of Voronoi summation with coefficients $\tau(n)$, which we will see later.  We express $Y_0(x)$ and $K_0(x)$ in three different forms. The first and second forms are useful for large $x$ and small $x$ respectively, and the third expression is helpful in separating variables.

\begin{lem} \label{lem:Y0K0}
Let $Y_0(x)$ be the Bessel function of the second kind of order 0, and $K_0(x)$ is the modified Bessel function of the second kind of order 0.  Then
	
	 \es{\label{asymY0} Y_{0} (2\pi x) = \frac{1}{\pi\sqrt x} \textrm{Im}\pr{ W(2\pi x)\e{x - \frac 18}  },}
	 and 
	 \es{\label{asymK0} K_0(x) = \pr{\frac{1}{2x}}^{1/2} e^{-x} \, W_1(x), }
	 where $W^{(j)}(x) \ll_{j, k} x^{-j} $ and $W_1^{(j)}(x) \ll_j x^{-j}$. This $W$-function is the same as the one in Lemma \ref{lem:Besselresult}. Moroever, 
	 \es{\label{Y0taylor} Y_{0}(x) = \frac{2}{\pi} \pr{\ln \pr{\frac x2} + \gamma} \sum_{k = 0}^\infty  (-1)^k \frac{x^{2k}}{4^k(k !)^2}  + \frac{2}{\pi}\sum_{k = 1}^\infty (-1)^{k + 1} H_k \frac{x^{2k}}{4^k (k!)^2},}
	 and
	 \es{\label{K0taylor}   K_{0}(x) = -\pr{\ln \pr{\frac x2} + \gamma} \sum_{k = 0}^\infty   \frac{x^{2k}}{4^k(k !)^2}  + \sum_{k = 1}^\infty  H_k \frac{x^{2k}}{4^k (k!)^2},}
where $\gamma$ is the Euler's constant and $H_k$ is a harmonic number, defined by
$$ H_k = \sum_{m = 1}^k \frac{1}{m}.$$

Let $0 < \sigma < 1 $. Then for some constants $\kappa$ and $\kappa_1$
	 \es{\label{Y0int}  Y_0(x) = \frac{1}{2\pi i}\int_{(-\sigma)} \gamma(s) x^{-s} \> ds + \frac 2{\pi}\ln x + \kappa,}
	 and
	 \es{\label{K0int} K_0(x) = \frac{1}{2\pi i} \int_{(-\sigma)} \gamma_1(s) x^{-s} \> ds - \ln x + \kappa_1 }
	 for some $\gamma(s) $ and $\gamma_1(s)$ satisfying $\int_{(-\sigma)} |\gamma(s)| \> |ds| \ll 1$ and $\int_{(-\sigma)} |\gamma_1(s)| \> |ds| \ll 1$.
	 
	
\end{lem}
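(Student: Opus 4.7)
The statements assembled in Lemma \ref{lem:Y0K0} are all classical properties of $Y_0$ and $K_0$, so the plan is to either cite a standard reference (such as Watson's treatise, as was done for the analogous Lemma \ref{lem:Besselresult}) or derive each formula from a well-known integral representation. The only part that genuinely requires work is the Mellin-type expressions \eqref{Y0int} and \eqref{K0int}, since those are tailored for the later application to Voronoi summation and do not appear in this form in standard references.

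For the large-$x$ asymptotics \eqref{asymY0} and \eqref{asymK0}, the plan is to invoke Hankel's asymptotic expansion. For $Y_0$, one uses the relation $H_0^{(1)}(x) = J_0(x) + i Y_0(x)$ with the Hankel expansion
\est{H_0^{(1)}(2\pi x) = \frac{1}{\pi\sqrt{x}}\, W(2\pi x)\, e\!\pr{x - \tfrac18},}
where $W$ has the derivative bounds stated (the same $W$ as in Lemma \ref{lem:Besselresult}, which is why the statement says so); taking imaginary parts gives \eqref{asymY0}. For $K_0$, one uses the classical expansion $K_0(x) = \sqrt{\pi/(2x)}\, e^{-x}(1 + O(1/x))$, and absorbs the asymptotic series into $W_1$.

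For the Taylor expansions \eqref{Y0taylor} and \eqref{K0taylor}, the plan is to start from the standard series $I_0(x) = \sum_k \frac{x^{2k}}{4^k (k!)^2}$ and its counterpart $J_0(x) = \sum_k (-1)^k \frac{x^{2k}}{4^k (k!)^2}$, then apply the identities $Y_0(x) = \frac{2}{\pi}\bigl(\ln(x/2) + \gamma\bigr) J_0(x) - \frac{2}{\pi}\sum_{k\geq 1} (-1)^k H_k \frac{x^{2k}}{4^k (k!)^2}$ and $K_0(x) = -(\ln(x/2) + \gamma) I_0(x) + \sum_{k\geq 1} H_k \frac{x^{2k}}{4^k (k!)^2}$, which are themselves standard.

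The main (and only nontrivial) step is \eqref{Y0int}--\eqref{K0int}. Here the plan is Mellin inversion. Starting from the classical Mellin transform
\est{\int_0^\infty K_0(x)\, x^{s-1}\, dx = 2^{s-2}\, \Gamma(s/2)^2 \qquad (\tRe(s) > 0),}
Mellin inversion gives $K_0(x) = \frac{1}{2\pi i}\int_{(c)} 2^{s-2}\Gamma(s/2)^2 x^{-s}\, ds$ for any $c > 0$. I would then shift the contour from $(c)$ to $(-\sigma)$ with $0 < \sigma < 1$, crossing only the double pole of $\Gamma(s/2)^2$ at $s = 0$. Expanding $\Gamma(s/2) = \tfrac{2}{s} - \gamma + O(s)$ and $2^{s-2} x^{-s} = \tfrac14 e^{-s\ln(x/2)} = \tfrac14 - \tfrac{s}{4}\ln(x/2) + O(s^2)$, the residue is a linear polynomial in $\ln(x/2)$, yielding exactly a contribution of the form $-\ln x + \kappa_1$. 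Setting $\gamma_1(s) := 2^{s-2}\Gamma(s/2)^2$ and verifying that $\int_{(-\sigma)} |\gamma_1(s)|\, |ds| \ll 1$ via Stirling (exponential decay of $\Gamma$ on vertical lines) completes \eqref{K0int}. For \eqref{Y0int} the same strategy applies: use
\est{\int_0^\infty Y_0(x)\, x^{s-1}\, dx = -\frac{1}{\pi}\, 2^{s-1}\, \Gamma(s/2)^2 \cos(\pi s/2) \qquad (0 < \tRe(s) < 3/2),}
shift past the double pole at $s = 0$, and compute the residue, which produces $\frac{2}{\pi} \ln x + \kappa$ for an explicit $\kappa$. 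The expected obstacle is purely bookkeeping: tracking the coefficient of $\ln x$ (which must come out to $-1$ for $K_0$ and $2/\pi$ for $Y_0$) in the residue computation, and confirming integrability on the shifted line, but both are straightforward Stirling and local expansion arguments.
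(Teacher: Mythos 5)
Your proposal is correct and follows essentially the same route as the paper: the asymptotics and Taylor expansions are quoted from standard references, and \eqref{Y0int}--\eqref{K0int} are obtained from exactly the same Mellin transforms (with $\gamma_1(s)=2^{s-2}\Gamma^2(s/2)$ and $\gamma(s)=-2^{s-1}\pi^{-1}\Gamma^2(s/2)\cos(\pi s/2)$) followed by a contour shift to $(-\sigma)$ past the double pole at $s=0$, whose residue yields the stated logarithmic terms. The residue and integrability computations you sketch match the paper's.
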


\begin{proof} The result of (\ref{asymY0}) and (\ref{asymK0}) can be found on p.206 in \cite{Watt}, and Equation (\ref{Y0taylor}) and (\ref{K0taylor}) are given in Section 9 of \cite{AS}. We are left to prove Equation (\ref{Y0int}) and (\ref{K0int}). From the Mellin transform in Equation 17 of Section 6.8  \cite{table}, we obtain that
	$$ Y_0(x) = \frac{1}{2\pi i}\int_{(\sigma)} \gamma(s) x^{-s} \> ds; \ \ \ \ \ \ 0 < \sigma  < 3/2$$
	where 
	$$ \gamma(s) = -2^{s-1} \pi^{-1} \Gamma^2\pr{\frac s2} \cos\pr{\frac{s\pi}{2}}.  $$ 
	
	The integral representation above is only conditionally convergent, where the limit is taken for $-T < \tIm s < T$ and letting $T$ go to infinity.  By a standard argument, we may shift the contour of integration to $-\sigma$ where $0<\sigma < 1$.  In so doing, we pick up the residue of $\gamma(s) x^{-s}$ at $s=0$ which is $\frac 2{\pi}\ln x - \frac 2{\pi}\ln 2 + \frac{2 \gamma}{\pi}$ using the Laurent expansion of $\Gamma(s)$ at $s=0$.  Thus
	\begin{equation*}
	Y_0(x)=\frac{1}{2\pi i}\int_{(-\sigma)} \gamma(s) x^{-s} \> ds + \frac 2{\pi}\ln x - \frac 2{\pi}\ln 2 + \frac{2 \gamma}{\pi},
	\end{equation*}
	where $\gamma$ is  the Euler's constant.  By Stirling's formula, the integral above is now absolutely convergent.  In particular, 
	$$|\gamma(s)| \ll |t|^{-\sigma - 1},
	$$for $s = -\sigma + it$ and $|t| >1$. This proves Equation (\ref{Y0int}).
	
	Finally, Equation (\ref{K0int}) follows from the Mellin transform in Equation 26 of Section 6.8 \cite{table}, which is
	$$K_0(x) = \frac{1}{2\pi i} \int_{(\sigma)} \gamma_1(s) x^{-s} \> ds ;\ \ \ \ \ \ \ \ \ \  \sigma > 0$$
	where $\gamma_1 (s) = 2^{s - 2} \Gamma^2\pr{\frac s2}. $ We then shift the contour of integration to $-\sigma$ where $0<\sigma < 1$ and pick up the residue of $\gamma_1(s) x^{-s}$ at $s=0$, which is $ \ln2 - \ln x -\gamma. $ This implies Equation (\ref{K0int}).
\end{proof}

Our summation will involve divisor functions, and we will apply Voronoi summation formula (e.g. see Theorem 4.10  in \cite{IK}).

\begin{lem}[Voronoi summation formula] \label{lem:voronoidn}
	Suppose $g(x)$ is smooth and compactly supported on $\mathbb R^+$. $Y_0$ and $K_0$ are Bessel functions defined as in Lemma \ref{lem:Y0K0}. Let $ad \equiv 1 \mod c$. 
	 Then
\est{\sum_{n = 1}^\infty \sigma(n) \e{\frac{an}{c}} g(n) &= \frac{1}{c} \int_0^\infty (\log x + 2\gamma - 2\log c) g(x) \> dx \\
	& \hskip 0.3in - \frac{2\pi}{c} \sum_{\ell = 1}^\infty \sigma(\ell) \e{\frac{-d\ell}{c}} \int_0^\infty Y_0 \pr{\frac{4\pi}{c} \sqrt{\ell x}} g(x) \> dx \\
	& \hskip 0.3in + \frac{4}{c} \sum_{\ell = 1}^{\infty} \sigma(\ell) \e{\frac {d\ell}{c}} \int_0^\infty K_0  \pr{\frac{4\pi}{c} \sqrt{\ell x}} g(x) \> dx. }
\end{lem}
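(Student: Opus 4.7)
The plan is to derive this formula via Mellin inversion combined with the functional equation for the (additively twisted) Estermann zeta function, following the strategy of Theorem 4.10 in \cite{IK}; the Bessel identities in Lemma \ref{lem:Y0K0} will then convert the resulting Mellin--Barnes kernels into $Y_0$ and $K_0$. Since $g$ is smooth and compactly supported on $\mathbb{R}^+$, its Mellin transform $\tilde g(s)=\int_0^\infty g(x)x^{s-1}\,dx$ is entire with super-polynomial decay on vertical lines, so Mellin inversion on a line $(\sigma)$ with $\sigma>1$ gives
\est{\sum_{n\geq 1}\sigma(n)\e{\frac{an}{c}}g(n)=\frac{1}{2\pi i}\int_{(\sigma)}\tilde g(s)\,E(s,a/c)\,ds,}
where $E(s,a/c):=\sum_{n\geq 1}\sigma(n)\e{an/c}\,n^{-s}$ converges absolutely for $\tRe(s)>1$.

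Next I would establish a functional equation for $E(s,a/c)$. Grouping the defining sum by the residue classes mod $c$ of the two divisors in $\sigma(n)=\sum_{n=n_1 n_2}1$ expresses $E(s,a/c)$ as a linear combination of products of Hurwitz zeta values $\zeta(s,b_1/c)\zeta(s,b_2/c)$. Applying the classical Hurwitz functional equation together with the Fourier identity for $\e{ab_1b_2/c}$ produces an identity schematically of the form
\est{E(s,a/c)=\frac{2(2\pi)^{2s-2}}{c^{2-2s}}\,\Gamma(1-s)^2\Big[\cos(\pi s)\,E(1-s,\bar a/c)-E(1-s,-\bar a/c)\Big],}
with $a\bar a\equiv 1\pmod c$. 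The asymmetry between these two terms is exactly what produces $Y_0$ on one side and $K_0$ on the other once matched against Lemma \ref{lem:Y0K0}.

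I would then shift the contour from $(\sigma)$ to $(-\sigma)$ for small $\sigma>0$, the shift being justified by decay of $\tilde g$ together with polynomial bounds for $E(s,a/c)$ in vertical strips. The only pole crossed is a double pole at $s=1$ whose Laurent expansion reads
\est{E(s,a/c)=\frac{1}{c(s-1)^2}+\frac{2\gamma-2\log c}{c(s-1)}+O(1),}
whose residue against $\tilde g(s)$ delivers the main term $c^{-1}\int_0^\infty(\log x+2\gamma-2\log c)g(x)\,dx$. In the shifted integral I would substitute the functional equation, change variables $s\mapsto 1-s$, unfold the $\ell$-sum from $E(1-s,\pm\bar a/c)$, and interchange sum with integral; for each $\ell$ the inner $s$-integral is a Mellin--Barnes transform of $g$ whose kernel, after the substitution $x\mapsto \tfrac{4\pi}{c}\sqrt{\ell x}$, matches exactly the representations \eqref{Y0int} and \eqref{K0int}, producing the $Y_0$- and $K_0$-weighted dual sums stated in the lemma.

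The main obstacle is the bookkeeping in the second step: one must carefully track the gamma and trigonometric factors in the Estermann functional equation so that the double-pole residue reproduces $2\gamma-2\log c$ cleanly, and so that the dual kernel splits unambiguously into a cosine piece (which becomes $Y_0$) and a non-cosine piece (which becomes $K_0$) under the Mellin--Barnes identification. Once this is handled, the remaining interchange of sums and integrals is routine thanks to the rapid decay of $\tilde g$ and the compact support of $g$.
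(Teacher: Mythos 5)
The paper does not actually prove this lemma --- it simply cites Theorem 4.10 of \cite{IK} --- and your sketch is precisely the standard Estermann-zeta/Hurwitz functional-equation argument behind that theorem, so your approach is essentially the same as the source the paper relies on and is correct. One bookkeeping point to fix when you carry it out: in the functional equation the $c$-power should be $c^{1-2s}$ rather than $c^{2s-2}$, and the $\cos(\pi s)$ must accompany the $E(1-s,-\bar a/c)$ term (the one generating the $Y_0$ sum with $e(-d\ell/c)$), since it is $Y_0$'s Mellin kernel $\gamma(s)=-2^{s-1}\pi^{-1}\Gamma^2(s/2)\cos(s\pi/2)$ from Lemma \ref{lem:Y0K0} that carries the cosine, while $K_0$'s kernel $\gamma_1(s)=2^{s-2}\Gamma^2(s/2)$ does not.
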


Finally, we will eventually reduce our bound to applications of the following large sieve inequality involving $GL(1)$ harmonics as stated in Exercise 5, Chapter 7 in \cite{IK}.

\begin{lem} \label{lem:largesieve}
	For any complex numbers $\alpha_m, \beta_n$, we have
	\est{ \sum_{q \leq Q} \sumstar_{a \mod q} \left| \sumtwo_{\substack{m \leq M, n \leq N \\ (mn, q) = 1}} \alpha_m \beta_n \e{\frac{a m \bar n}{q}}\right|^2 \leq (Q^2 + MN) \| \alpha \|^2 \| \beta\|^2.}
\end{lem}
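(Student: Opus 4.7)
The plan is to follow the standard route for a bilinear large sieve of this shape: open the square, execute the inner sum over $a$ by orthogonality of additive characters, and then control the resulting diagonal and off-diagonal contributions.

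After expanding the square and interchanging summations so that the $a$-sum is innermost, the left-hand side equals
\[ \sum_{m_1, m_2, n_1, n_2} \alpha_{m_1}\bar{\alpha}_{m_2}\beta_{n_1}\bar{\beta}_{n_2}\sum_{\substack{q \leq Q \\ (q,\, m_1 m_2 n_1 n_2) = 1}}\sumstar_{a \bmod q}\e{\tfrac{a(m_1 \bar{n}_1 - m_2 \bar{n}_2)}{q}}. \]
Since $(n_1 n_2, q) = 1$, one writes $m_1 \bar{n}_1 - m_2 \bar{n}_2 \equiv (m_1 n_2 - m_2 n_1)\overline{n_1 n_2} \bmod q$, and after the substitution $a \mapsto a\, n_1 n_2$ the inner $a$-sum becomes the Ramanujan sum $c_q(m_1 n_2 - m_2 n_1)$.

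Next, I would split the quadruples into the diagonal $m_1 n_2 = m_2 n_1$ and the off-diagonal $m_1 n_2 \neq m_2 n_1$. On the diagonal, $c_q(0) = \varphi(q)$, so $\sum_{q \leq Q}\varphi(q) \ll Q^2$. Parametrizing diagonal quadruples by the lowest-terms fraction $m_i/n_i = r/s$ with $(r, s) = 1$, via $m_i = rk_i$ and $n_i = sk_i$, the sum over them factors as $\sum_{(r, s) = 1}\big|\sum_k \alpha_{rk}\beta_{sk}\big|^2$, which by Cauchy--Schwarz in $k$ is bounded by $\|\alpha\|^2 \|\beta\|^2$ (up to divisor factors $\ll (MN)^\epsilon$ absorbed into the implicit constant), producing the $Q^2 \|\alpha\|^2 \|\beta\|^2$ portion of the bound.

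For the off-diagonal, I would use the trivial bound $|c_q(k)| \leq (q, k)$ for $k \neq 0$ and swap the $q$-sum with the sum over quadruples to obtain $\sum_{q \leq Q,\, q \mid k} (q, k) \ll |k|^\epsilon$ with $|k| = |m_1 n_2 - m_2 n_1| \leq 2MN$. The remaining shifted-convolution sum is treated by a bilinear Cauchy--Schwarz pairing $\alpha_{m_1}\beta_{n_2}$ against $\alpha_{m_2}\beta_{n_1}$: for fixed $(m_1, n_2)$ and $k$, the divisor bound $\tau(|m_1 n_2 - k|) \ll (MN)^\epsilon$ controls the number of pairs $(m_2, n_1)$ with $m_2 n_1 = m_1 n_2 - k$, and extracting $\ell^2$-norms then yields the $MN \|\alpha\|^2 \|\beta\|^2$ portion.

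The main obstacle is the off-diagonal analysis: a careless absolute-value bound loses factors of $Q$ or of $MN$, and preserving the clean $(Q^2 + MN)$ shape requires the bilinear Cauchy--Schwarz described above together with a careful accounting of divisor sums, with all logarithmic and $(MN)^\epsilon$ losses absorbed into the implicit constant. This is acceptable in view of the $q^\epsilon$ target in Proposition~\ref{prop:mainbound}.
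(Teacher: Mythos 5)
The paper offers no proof of this lemma; it is quoted verbatim as Exercise 5 of Chapter 7 in Iwaniec--Kowalski, so there is no internal argument to compare against. Your proposal, however, contains a genuine gap in the off-diagonal analysis, and the approach as described cannot be repaired to give the stated bound.

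The problem is the claim that the $q$-average of the Ramanujan sums is $\ll |k|^{\epsilon}$. First, $c_q(k)\neq 0$ does not require $q\mid k$ (for instance $c_q(k)=\mu(q)$ whenever $(q,k)=1$), so the sum over $q$ cannot be restricted to divisors of $k$; and even restricted to $q\mid k$ one has $\sum_{q\le Q,\,q\mid k}(q,k)=\sum_{q\le Q,\,q\mid k}q$, which can be as large as $Q$. In fact
\begin{equation*}
\sum_{q\le Q}\bigl|c_q(k)\bigr|\;\ge\;\sum_{\substack{q\le Q\\ (q,k)=1}}|\mu(q)|\;\gg\;Q,
\end{equation*}
so the correct absolute-value bound is $\sum_{q\le Q}|c_q(k)|\le\sum_{q\le Q}(q,k)\le Q\,\tau(k)$, carrying a full factor of $Q$. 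Feeding this into your bilinear Cauchy--Schwarz/divisor-bound treatment of the shifted convolution yields an off-diagonal contribution of size $Q\,(MN)^{1+\epsilon}\|\alpha\|^2\|\beta\|^2$ rather than $MN\|\alpha\|^2\|\beta\|^2$. This loss is not cosmetic: the lemma is true only because, for each fixed modulus $q$, the off-diagonal terms cancel among themselves, and taking absolute values quadruple by quadruple destroys exactly that cancellation (try $\alpha_m=\beta_n=1$: the absolute-value off-diagonal is $\asymp QM^2N^2$ against a target of $M^2N^2$). Moreover the extra factor $Q\asymp T_1/(dgg_1)$ would be fatal in the paper's applications (e.g.\ Case 1.2 of Section 6, where the bound $(Q^2+MN)\|\alpha\|^2\|\beta\|^2$ is used with no room to spare). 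A correct proof must exploit the structure differently --- e.g.\ the reduction to the additive large sieve over Farey fractions carried out in Iwaniec--Kowalski --- rather than opening the square and estimating Ramanujan sums trivially. A secondary, less serious point: the lemma as stated is a clean inequality with constant $1$, whereas your diagonal treatment already concedes $(MN)^{\epsilon}$ losses from divisor sums; that would be tolerable for the application, but the factor of $Q$ in the off-diagonal is not.
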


\section{First step toward the proof of Proposition \ref{prop:mainbound}}
By the asymptotic large sieve in Lemma \ref{lem:ALS}, we obtain that 
\est{ \frac{2}{\phi(q)}&\sum_{\substack{\chi \mod q \\ \chi(-1) = (-1)^k}} \sumh_{f \in \mathcal H_\chi} \left|\sum_{n \geq 1} \lambda_f(bcn) a_{bcn} \right|^2 \\
	& =   \frac{1}{q} \sum_{\substack{1 \leq t \leq T \\ (t, q) = 1}} \left( \frac{2\pi}{t}\right)^2 \sum_{1 \leq h \leq H} |P_{ht}(\alpha)|^2 + O \pr{q^\epsilon \pr{\frac Y{q^2} + \sqrt{\frac{Y}{qH}} }} \| \alpha \|^2}
where $T = Y/q$ and $1 \leq H \leq T.$ We choose $H = Y/q$ so that the error term is small. We note that since $Y \leq X \leq q^{2 + \epsilon}$
$$\| \alpha \|^2 \leq  \sum_{Y < bcn \leq 2Y} \frac{\sigma_4^2(n)}{bcn} \ll \frac{1}{bc} \pr{\log \frac{Y}{bc}}^{16} \ll q^\epsilon.$$

Throughout the paper, $\epsilon$ denotes an arbitrary small positive constant that may vary from term to term, but $\epsilon_1$ will be a small fixed constant, which can be chosen later. 

We divide $h$ and $t$ into dyadic intervals, and it is enough to consider 
\est{  \frac{1}{qT_1^2} \sum_{\substack{t \sim T_1 \\ (t, q) = 1}} \sum_{h} \Psi\left( \frac h{H_1}\right) |P_{ht}(\alpha)|^2,}where we remind the reader that $\Psi_1$ denotes a compactly supported smooth function and where we write $a \sim A$ as shorthand for $A < a \leq 2A.$ 
We know that $\sigma_4(n) = \sum_{n_1n_2 = n} \sigma(n_1) \sigma(n_2)$, and we can divide the sum over $n_1$ into dyadic intervals. Therefore we write 
\es{\label{def:Pht} P_{ht}(\alpha) &=  \sum_{n} \frac{\sigma_4(n)}{\sqrt{bcn}} \Psi_1\pr{\frac{bcn}{Y}} S(h\bar q, bcn; t) J_{k-1} \left( \frac{4\pi}{t} \sqrt{\frac{hbcn}{q}}\right) \\
	&= \sumd_{N} \sumstar_{r \mod t} \e{\frac{h\bar q \bar r}{t}} \sum_{n_1} \frac{\sigma_2(n_1)}{\sqrt{bcn_1}} \Psi\left( \frac {n_1}{N}\right)\mathcal N(n_1; r, t, h, bc), }
where 
\est{\mathcal N(n_1; r, t, h, bc) := \sum_{n_2} \frac{\sigma_2(n_2)}{\sqrt{n_2}}  J_{k-1} \left( \frac{4\pi}{t} \sqrt{\frac{hbcn_1n_2}{q}}\right) \Psi_1\pr{\frac{bcn_1n_2}{Y}}\e{\frac{ r bcn_1n_2 }{t}},}
and $\sumstar_{r \mod t}$ denotes a sum over $ 1 \leq r \leq t$ such that $(r, t) = 1$.  Since $n_1n_2 \ll Y$, by symmetry, we may without loss of generality assume that 

$$N_1 \ll \sqrt {\frac{Y}{bc}}.$$

 Thus it is enough to consider 
\est{\mathcal F(T_1, H_1; q) := \frac{1}{qT_1^2} \sum_{\substack{t \sim T_1 \\ (t, q) = 1}} \sum_{h} \Psi\left( \frac h{H_1}\right) \left|\sumstar_{r \mod t} \e{\frac{h\bar q \bar r}{t}} \sum_{n_1} \frac{\sigma_2(n_1)}{\sqrt{bcn_1}} \Psi\left( \frac {n_1}{N}\right)\mathcal N(n_1; r, t, h, bc) \right|^2. }

Let 
$$ \frac{bcn_1}{t} = \frac{m}{\eta}, $$
where $(m, \eta) = 1.$  Applying Voronoi summation formula in Lemma \ref{lem:voronoidn} to $\mathcal N(n_1; r, t, h, bc)$, we obtain that 
$$\mathcal N(n_1; r, t, h, bc) = R_1 + R_2 + R_3,$$
where 
\es{\label{def:R1} R_1 := \frac{1}{\eta} \int_0^\infty (\log x + 2\gamma - 2\log \eta) \frac{1}{\sqrt x} J_{k-1}\pr{\frac{4\pi}{t} \sqrt{\frac{hbcn_1x}{q}}} \Psi_1\pr{\frac{bcn_1x}{Y}}  \> dx; }
\es{\label{def:R2} R_2 := - \frac{2\pi}{\eta} \sum_{\ell = 1}^\infty \sigma_2(\ell) \e{\frac{-\bar m_\eta \bar r_\eta \ell}{\eta}} \int_0^\infty Y_0 \pr{\frac{4\pi}{\eta} \sqrt{\ell x}} \frac{1}{\sqrt x} J_{k-1}\pr{\frac{4\pi}{t} \sqrt{\frac{hbcn_1x}{q}}} \Psi_1\pr{\frac{bcn_1x}{Y}}  \> dx ;}

\es{\label{def:R3} R_3 := \frac{4}{\eta} \sum_{\ell = 1}^{\infty} \sigma_2(\ell) \e{\frac {\bar m_\eta \bar r_\eta \ell}{\eta}} \int_0^\infty K_0  \pr{\frac{4\pi}{\eta} \sqrt{\ell x}} \frac{1}{\sqrt x} J_{k-1}\pr{\frac{4\pi}{t} \sqrt{\frac{hbcn_1x}{q}}} \Psi_1\pr{\frac{bcn_1x}{Y}}  \> dx.}
Note here that $m \bar m_\eta \equiv 1 \mod \eta$ and $r \bar r_\eta \equiv 1 \mod \eta.$ Let 
\es{\label{def:Fi}\mathcal F_i(T_1, H_1; q) = \frac{1}{qT_1^2} \sum_{\substack{t \sim T_1 \\ (t, q) = 1}} \sum_{h} \Psi\left( \frac h{H_1}\right) \left| \sumstar_{r \mod t} \e{\frac{h\bar q \bar r}{t}} \sum_{n_1 } \frac{\sigma_2(n_1)}{\sqrt{bcn_1}}\Psi\left( \frac {n_1}{N}\right) R_i \right|^2}
for $i = 1, 2, 3.$
Since $(|a| + |b| + |c|)^2 \leq 3 (|a|^2 + |b|^2 + |c|^2) $, we have that

\est{\mathcal F(T_1, H_1; q) \ll \mathcal F_1(T_1, H_1; q) + \mathcal F_2(T_1, H_1; q) + \mathcal F_3(T_1, H_1; q).}
Hence Proposition \ref{prop:mainbound} will follow from the following Lemma.

\begin{lemma} \label{lem:boundFi} Let $0 < T_1 \leq T$ and $0 < H_1 \leq H$, where $T = H = Y/q$. Then  for $i = 1, 2, 3$, 
	$$  \mathcal F_i(T_1, H_1 ; q) \ll q^\epsilon,$$
where the implied constant depends on $\epsilon.$	
\end{lemma}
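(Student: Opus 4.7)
The plan is to handle $\mathcal F_1, \mathcal F_2, \mathcal F_3$ by a common strategy: simplify the Voronoi transform $R_i$ via suitable expansions of the Bessel kernels so as to produce explicit amplitude and phase in the dual variable $\ell$ (for $i = 2,3$), then open the square $|\cdots|^2$, carry out the $r$-sum modulo $t$ by additive character orthogonality together with Poisson summation in $h$, and finally invoke the $GL(1)$ large sieve of Lemma \ref{lem:largesieve}.

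For $\mathcal F_1$, the term $R_1$ carries no dual $\ell$-summation and is essentially a slowly varying function of $n_1$ with mild logarithmic factors in $\eta = t/(bcn_1,t)$. After opening the square, orthogonality of the $r$-sum modulo $t$ reduces the two inner $\bar r$-phases to a Ramanujan-type sum whose modulus divides $t$; the residual multi-sum over $t, h, n_1, n_1'$ is then bounded directly using $|c_t(n)| \le (t,n)$, the divisor bound, and trivial estimates on the $x$-integrals, which is more than enough to gain $q^\epsilon$.

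For $\mathcal F_2$ and $\mathcal F_3$, I would insert the Mellin integral representations \eqref{Y0int}, \eqref{K0int} for $Y_0$ and $K_0$, which separates $\ell$ from $x$; the additional $\log x$ and constant pieces contribute expressions of the same shape as $R_1$ and are handled by the $\mathcal F_1$ argument. For the main piece the inner $x$-integral takes the form $\int x^{-(s+1)/2} J_{k-1}\!\left( \tfrac{4\pi}{t}\sqrt{hbcn_1 x/q}\right) \Psi_1(bcn_1 x/Y)\, dx$, which I would analyze using the oscillating asymptotic \eqref{asympJxbig} together with repeated integration by parts (and \eqref{asympJxSm} in the non-oscillatory regime) to show it is negligible unless $\ell$ is localized in a specific dyadic range $\ell \sim L$ depending on $h, n_1, t, q, \eta$; on this range it factors as an explicit amplitude times an oscillatory phase in $\sqrt\ell$. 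Opening the square now produces dual variables $n_1', \ell', r'$ together with a combined additive character
\est{
\e{\frac{h\bar q(\bar r - \bar r')}{t}}\, \e{\mp\,\frac{\bar m_\eta \bar r_\eta\, \ell}{\eta} \,\pm\, \frac{\bar m'_{\eta'} \bar r'_{\eta'}\, \ell'}{\eta'}}.
}
Using reciprocity (exploiting $\eta, \eta' \mid t$), the Voronoi phases combine into a single additive character modulo $t$; orthogonality of the $r$- and $r'$-sums modulo $t$ produces a congruence relating $h, n_1, n_1', \ell, \ell'$ modulo $t$, and Poisson summation in $h$ turns this congruence into a dual variable $\tilde h$ with a clean phase of the form $\e{\tilde h\,(an_1 \bar\ell - a' n_1' \bar{\ell}')/t}$. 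This is precisely the input to Lemma \ref{lem:largesieve} with modulus $t \sim T_1$ and coefficients built from $\sigma_2(n_1)/\sqrt{n_1}$ and $\sigma_2(\ell)/\sqrt{\ell}$; combining the large sieve bound $T_1^2 + N_1 L$ with the prefactor $1/(qT_1^2)$, the amplitude factors from the Bessel integrals, and the constraints $N_1 \ll \sqrt{Y/(bc)}$, $T_1 \le Y/q$, yields the required bound $\mathcal F_i \ll q^\epsilon$.

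The principal technical obstacle is the stationary-phase analysis pinning down the effective range $\ell \sim L$ and extracting an amplitude/phase on that range clean enough to be fed into Lemma \ref{lem:largesieve}, together with the bookkeeping needed to assemble the mixed modular phases (the outer one modulo $t$, the Voronoi ones modulo $\eta \mid t$ and $\eta' \mid t$) into a single additive character modulo $t$. A secondary but still delicate issue is verifying that the various ``off-diagonal'' ranges produced after Poisson (in particular when the dual variable $\tilde h$ vanishes) are absorbed either by the $\mathcal F_1$-type argument or by direct divisor estimates.
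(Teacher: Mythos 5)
Your skeleton (Voronoi already performed, Bessel asymptotics, Poisson in $h$, then the $GL(1)$ large sieve) matches the paper's broad strategy, but there is a genuine gap at exactly the point you flag as ``the principal technical obstacle,'' and it is not a routine one. In the hardest regime $T_1 \ll q^{-\epsilon_1}\sqrt{H_1Y/q}$, both $J_{k-1}$ and $Y_0$ must be written in oscillatory form, and stationary phase localizes $\ell$ to $\ell \asymp H_1Nbc/(qd^2g^2g_1)$; the surviving phase is $\e{-\tfrac{2}{t}\sqrt{\ell xY/(bcngg_1)}}$, a joint function of $\ell$ and $n$ through $\sqrt{\ell/n}$. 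Lemma \ref{lem:largesieve} requires the coefficient attached to $\ell$ to be independent of $n$ and vice versa, so this phase cannot simply be absorbed into $\beta_\ell$ as your sketch asserts (``it factors as an explicit amplitude times an oscillatory phase in $\sqrt\ell$'' is false: the phase does not factor). The paper's resolution --- subdividing $n,n',\ell,\ell'$ into short intervals of relative length $R=\tfrac{T_1}{q^{\epsilon_1/4}}\sqrt{q/(H_1Y)}$, Taylor-expanding the phase about the interval endpoints to force separation of variables, and showing that only $O(q^{\epsilon_1/2}R^{-3})$ of the $\asymp R^{-4}$ tuples of intervals survive the constraint $|\ell'n-\ell n'|\ll q^{\epsilon_1/2}\tfrac{N}{g}\tfrac{NH_1bc}{qd^2g^2g_1}R$ --- is the heart of the argument, and the extra factor of $R$ saved in the tuple count (together with the localization of the $u$-integral to an interval of length $\asymp T_1\sqrt{q/(H_1Y)}\,q^{\epsilon_1/2}$) is exactly what makes the final bound close. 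Without these savings the naive combination of ``large sieve bound $T_1^2+N_1L$ times prefactor $1/(qT_1^2)$'' does not give $q^\epsilon$: with $L\asymp H_1Nbc/(qd^2g^2g_1)$ and $N\asymp\sqrt{Y/(bc)}$ one has $N L\asymp YH_1/q$, which can be of size $q^2$, so your closing bookkeeping is unsubstantiated as stated.

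Two smaller points. First, your treatment of $\mathcal F_1$ by ``trivial estimates on the $x$-integrals'' is insufficient: the trivial bound gives $\mathcal F_1\ll q^\epsilon\sqrt{H_1Y}/(T_1\sqrt q)$, which blows up for small $T_1$; the paper must split on $T_1$ versus $\sqrt{H_1Y/q}$ and exploit the oscillation of $J_{k-1}$ by repeated integration by parts in the small-$T_1$ range. Second, your plan to execute the $r,r'$ sums via reciprocity into a congruence among $h,n,n',\ell,\ell'$ and then Poisson in $h$ departs from the paper, which instead keeps $(t,r)$ as the outer family for the large sieve (the phase fed into Lemma \ref{lem:largesieve} is $\e{\bar n r\ell/t}$ with $r$ running over reduced residues mod $t$) and uses Poisson in $h$ only to tie $r'$ to $r$ through $\beta\equiv-\bar q(\bar r-\bar r')$; collapsing the $r$-average first would destroy the bilinear structure the large sieve needs, and you would have to rebuild it. These issues, especially the non-separable phase in the critical range, mean the proposal as written does not constitute a proof.
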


\section{Bounding $\mathcal F_1(T_1, H_1; q)$}
Firstly, $R_1$ defined in \eqref{def:R1} does not depend on variable $r$. The sum over $r$ is then the Ramanujan's sum, which is 
$$\sumstar_{r \mod t} \e{\frac{h\bar q \bar r}{t}} = \mu\left( \frac{t}{(t, h)}\right) \frac{\phi(t)}{\phi \pr{\frac{t}{(t, h)}}} \ll (t, h).$$
We change the variable from $x \rightarrow \frac{x}{bcn_1}$ inside the integral in \eqref{def:R1} and use Equation (\ref{asympJxbig}). Thus 

\es{ \label{eqn:boundR1} R_1 &= \frac{1}{\sqrt{bcn_1} \eta} \int_0^\infty (\log \frac{x}{bcn_1} + 2\gamma - 2\log \eta) \frac{1}{\sqrt x} J_{k-1}\pr{\frac{4\pi}{t} \sqrt{\frac{hx}{q}}} \Psi_1\pr{\frac{x}{Y}}  \> dx \\
	&\ll \frac{q^{\epsilon} (bcn_1, t)}{\sqrt{bcn_1 Y} T_1} \left( T_1 \sqrt{\frac{q}{H_1 Y}} \right)^{1/2} \left|  \int_0^\infty \e{  \pm \frac{2}{t} \sqrt{\frac{hx}{q}} }  \Psi_2 \pr{ \frac x Y} \> dx \right|, }
where $\Psi_2(x)$ is a smooth compactly supported function which may be expressed in terms of $W(x)$ and $\Psi_1(x)$. If $T_1 \ll \frac{1}{q^{\epsilon_1}}\sqrt{\frac{H_1Y}{q}}$, then we can integrate by parts many times and obtain that
$$ \mathcal F_1(T_1, H_1; q) \ll q^{-100},$$
which can be ignored. Otherwise, we bound the integral in Equation (\ref{eqn:boundR1}) trivially, use  $T_1 \gg \frac{1}{q^{\epsilon_1}}\sqrt{\frac{H_1Y}{q}}$  and derive that
\est{\mathcal F_1(T_1, H_1; q) &\ll  \frac{q^\epsilon}{qT_1^2}  \sum_{\substack{t \sim T_1 \\ (t, q) = 1 }} \sum_{h \sim H_1} \frac{Y (t, h)^2}{T_1^2} T_1 \sqrt{\frac{q}{H_1 Y}}   \left| \sum_{n_1 \sim N_1} \frac{(t, n_1) \sigma_2(n_1)}{n_1} \right|^2  \\
	&\ll q^{\epsilon} \frac{\sqrt{H_1 Y}}{T_1 \sqrt{q}}  \ll q^{\epsilon + \epsilon_1}.}
Upon choosing $\epsilon_1$ sufficiently small, we conclude the proof of Lemma \ref{lem:boundFi} for $i = 1.$

\section{Bounding $\mathcal F_2(T_1, H_1; q)$} \label{sec:F2} Since $m \bar m \equiv 1 \mod t$, $m \bar m \equiv 1 \mod \eta $ when $\eta | t$. Then after the change of variables $n_1 \rightarrow n$ and $x \rightarrow xY/(bcn)$, we write
\est{&  \sumstar_{r \mod t} \e{\frac{h\bar q \bar r}{t}} \sum_{n} \frac{\sigma_2(n)}{\sqrt{bcn}} R_2 \\
	&=  - 2\pi\frac{\sqrt{Y}}{bc}  \sumstar_{r  \mod t} \e{\frac{h\bar q \bar r}{t}} \sum_{n } \frac{\sigma_2(n)}{n \eta} \Psi\left( \frac {n}{N}\right)  \sum_{\ell = 1}^\infty \sigma_2(\ell) \e{\frac{-\bar m \bar r \ell}{\eta}} \\
	& \hskip 0.2in \times \int_0^\infty Y_0 \pr{\frac{4\pi}{\eta} \sqrt{\frac{\ell xY}{bcn}}}  J_{k-1}\pr{\frac{4\pi}{t} \sqrt{\frac{hxY}{q}}} \Psi_3\pr{x}  \> dx, }
where $\Psi_3(x) = \frac{\Psi_1(x)}{\sqrt x}$, and $R_2$ is defined in (\ref{def:R2}). Hence
\est{\mathcal F_2(T_1, H_1; q) &\ll  \frac{Y}{qT_1^2 (bc)^2} \sum_{\substack{t \sim T_1 \\ (t, q) = 1}} \sum_{h} \Psi\pr{\frac h{H_1}}  \bigg| \sumstar_{r  \mod t} \e{\frac{h\bar q \bar r}{t}} \sum_{n } \frac{\sigma_2(n)}{n \eta} \Psi\left( \frac {n}{N}\right)  \sum_{\ell = 1}^\infty \sigma_2(\ell) \e{\frac{-\bar m \bar r \ell}{\eta}} \\
	& \hskip 2in \times \int_0^\infty Y_0 \pr{\frac{4\pi}{\eta} \sqrt{\frac{\ell xY}{bcn}}}  J_{k-1}\pr{\frac{4\pi}{t} \sqrt{\frac{hxY}{q}}} \Psi_3\pr{x}  \> dx \bigg|^2   }
Recall that $\eta = \frac{t}{(bcn, t)}$.  Writing $d = (t, bc)$ we obtain that 
 
\est{\mathcal F_2(T_1, H_1; q) 
	&\ll  \frac{Y}{qT_1^2 (bc)^2} \sum_{\substack{d| bc \\ (d,q)=1}} \sum_{\substack{t \sim T_1/d \\ (t, q) = 1 \\ (t, bc/d)=1}}  \sum_{h} \Psi\pr{\frac h{H_1}}  \bigg| \sumstar_{r  \mod {td}} \e{\frac{h\bar q \bar r}{td}} \sum_{n } \frac{\sigma_2(n) (n, t)}{n t} \Psi\left( \frac {n}{N}\right)   \\
	& \hskip 0.2in \times \sum_{\ell = 1}^\infty \sigma_2(\ell) \e{\frac{-\overline {\frac{\frac{bc}{d}n}{(t, n)}} \bar r \ell}{\frac{t}{(t, n)}}}\int_0^\infty Y_0 \pr{\frac{4\pi (t, n)}{t} \sqrt{\frac{\ell xY}{bcn}}}  J_{k-1}\pr{\frac{4\pi}{td} \sqrt{\frac{hxY}{q}}} \Psi_3\pr{x}  \> dx \bigg|^2   }
Next, we remove the greatest common divisors $(t, n)$ and $(t, \ell)$ to facilitate an eventual application of the large sieve inequality as in Lemma \ref{lem:largesieve}. Thus
\es{\label{F2eqbeforecase} \mathcal F_2(T_1, H_1; q) 
	&\ll \frac{Y}{qT_1^4 (bc)^2} \sum_{\substack{d| bc \\ (d,q)=1}} d^2 \sum_{ \substack{g \leq T/d \\ (g, q) = 1}} \sum_{\substack{g_1 \leq \frac{T}{dg} \\ (g_1, q) = 1}} \sum_{\substack{t \sim \frac{T_1}{dgg_1} \\ (t, q) = 1 \\ (t, bc/d)=1}}  \sum_{h} \Psi\pr{\frac h{H_1}}   \bigg|  \sumstar_{r  \mod {tdgg_1}} \e{\frac{h\bar q \bar r}{tdgg_1}}    \\
	& \hskip 0.2in \times \sum_{ \substack{n \\ (n, t) = 1} } \frac{\sigma_2(ng)}{n} \Psi\left( \frac {ng}{N}\right) \sum_{\substack{(\ell, t) = 1}} \sigma_2(\ell g_1) \e{\frac{-\overline {\frac{bc}{d}n} \bar r \ell}{t}}\int_0^\infty Y_0 \pr{\frac{4\pi }{t} \sqrt{\frac{\ell xY}{bcngg_1}}} \\
	&\hskip 0.2in \times  J_{k-1}\pr{\frac{4\pi}{tdgg_1} \sqrt{\frac{hxY}{q}}} \Psi_3\pr{x}  \> dx \bigg|^2. }

We divide into 3 cases, depending on the size of $T_1$ with respect to $\sqrt{\frac{H_1Y}{q}}$, which are  $T_1 \gg  \sqrt{\frac{H_1Y}{q}}$,  $\frac{1}{q^{\epsilon_1}} \sqrt{\frac{H_1Y}{q}} \ll T_1 \ll  \sqrt{\frac{H_1Y}{q}}$, and $T_1 \ll \frac{1}{q^{\epsilon_1}} \sqrt{\frac{H_1Y}{q}} .$

\subsection*{Case 1: $T_1 \gg  \sqrt{\frac{H_1Y}{q}}$}

For this case, we prove the following Lemma.

\begin{lem} \label{lem:F2T1big} Let $T_1 \gg  \sqrt{\frac{H_1Y}{q}}$. Then 
	$$ \mathcal F_2(T_1, H_1 ; q) \ll q^\epsilon,$$
	where the implied constant depends on $\epsilon.$
	
\end{lem}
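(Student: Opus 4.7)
The plan exploits two distinct savings that are available precisely when $T_1 \gg \sqrt{H_1 Y/q}$. First, in this regime the $J_{k-1}$ factor in the integral defining $R_2$ has small argument:
$$z_J := \frac{4\pi}{tdgg_1}\sqrt{\frac{hxY}{q}} \ll \sqrt{\frac{H_1Y}{qT_1^2}} \ll 1$$
on the support of $\Psi_3$. Substituting the power series \eqref{asympJxSm} replaces $J_{k-1}(z_J)$ by its leading monomial $(z_J/2)^{k-1}/(k-1)!$ up to an admissible error, producing a multiplicative saving of $(H_1Y/qT_1^2)^{(k-1)/2}$ per integral and, crucially, trading an oscillatory Bessel factor for a polynomial dependence on $x$.

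Next I would separate the $\ell$, $n$, $x$ dependencies inside $Y_0$ via the Mellin integral representation \eqref{Y0int}, complemented by the elementary split of $\log u$ into $\log\ell + \log x - \log n + \text{const}$. Each piece factors completely as a product in $\ell$, $n$, and $x$, with the weight $\gamma(s)$ absolutely integrable on $\Re s = -\sigma$. Truncating $\ell \leq L$ at the natural scale $L \ll q^\epsilon T_1^2 bc N_1 gg_1/Y$ (emerging from absolute convergence of $\int|\gamma(s)|\,|ds|$ together with the smoothness of the integrand) incurs only a $q^\epsilon$ loss, after which the inner sum acquires the bilinear form required by the large sieve.

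The third step is to combine the two $r$-phases $e(h\bar q\bar r/(tdgg_1))$ and $e(-\overline{(bc/d)n}\bar r\ell/t)$ into a single exponential with denominator $tdgg_1$, using $\ell\, dgg_1 \cdot r^{-1}_t \equiv \ell\,dgg_1 \cdot r^{-1}_{tdgg_1} \pmod{tdgg_1}$. After Cauchy--Schwarz on the $r$-sum to strip the pure-$h$ exponential (bounded trivially by $\phi(tdgg_1) \ll T_1 q^\epsilon$, compensated by the $J_{k-1}$ saving), or alternatively after first evaluating the $r$-sum as a Ramanujan sum $c_{tdgg_1}(\cdot)$, the resulting bilinear form in $n$ and $\ell$ has precisely the shape $e(am\bar n/t)$ required by Lemma \ref{lem:largesieve} with $a = -\overline{bc/d}\,\bar r$ traversing primitive residues mod $t$. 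This yields
$$\sum_{t \sim T_1/(dgg_1)} \sumstar_{r} \Bigl|\sum_n\sum_{\ell \leq L}\alpha_n\beta_\ell\, e\!\left(\tfrac{am\bar n}{t}\right)\Bigr|^2 \ll q^\epsilon\bigl(T_1^2 + N_1 L\bigr).$$

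Combining all factors---the prefactor $Y/(qT_1^4 (bc)^2)$, the sums over $d,g,g_1,b,c$, the $h$-sum of effective size $\ll H_1 q^\epsilon$, and the decisive squared saving $(H_1Y/qT_1^2)^{k-1}$ from the $J_{k-1}$ expansion---and inserting the admissible ranges $Y \leq q^{2+\epsilon}$, $T_1, H_1 \leq Y/q$, and $N_1 \ll \sqrt{Y/bc}$, the whole estimate collapses to $\mathcal{F}_2(T_1, H_1; q) \ll q^\epsilon$. The principal obstacle will be calibrating the effective $\ell$-cutoff $L$: since $Y_0$ decays only polynomially of order $-1/2$, the truncation must be tight enough that the off-diagonal $N_1 L$ term in the large sieve stays within the budget afforded by the $J_{k-1}$ saving, which is exactly where the hypothesis $T_1 \gg \sqrt{H_1 Y/q}$ is essential.
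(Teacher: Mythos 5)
Your proposal correctly identifies two ingredients the paper also uses in this range (the small\,-argument power series for $J_{k-1}$, and the Mellin representation \eqref{Y0int} of $Y_0$ to separate variables before the large sieve), but it has a decisive gap in the treatment of the $h$-sum and the resulting $(r,r')$ double sum. After squaring, one faces $\sumstar_{r}\sumstar_{r'}$ modulo $tdgg_1$, i.e.\ $\asymp T_1^2$ pairs. The paper's essential move is Poisson summation in $h$ (after the $J$-expansion has rendered the $h$-dependence smooth), which produces the congruence $\beta\equiv -\bar q(\bar r-\bar r')\pmod{tdgg_1}$ with only $|\beta|\ll q^\epsilon T_1/H_1$ surviving; this collapses the $(r,r')$ sum to $\asymp T_1^2/H_1$ pairs, and that factor of $H_1$ exactly cancels the $H_1$ from the length of the $h$-sum. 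You instead bound the $h$-sum trivially by $H_1q^\epsilon$ and apply Cauchy--Schwarz in $r$, losing $\phi(tdgg_1)\asymp T_1$. Tracking the exponents: the prefactor $\tfrac{Y}{qT_1^4}$ times $H_1\cdot T_1$ times the large-sieve output $\asymp T_1^4/Y$ gives $\asymp H_1T_1/q$, which is as large as $q$ when $H_1\asymp T_1\asymp Y/q\asymp q$. The compensation you invoke from the $J_{k-1}$ expansion, namely $\bigl(H_1Y/(qT_1^2)\bigr)^{k-1}$, is \emph{not} a power saving here: the hypothesis of Case 1 is only $T_1\gg\sqrt{H_1Y/q}$, so this factor is $\ll 1$ but can be $\asymp 1$ at the boundary. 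Your alternative of evaluating the $r$-sum as a Ramanujan sum is also invalid for $R_2$, since the summand depends on $r$ through the second phase $\e{-\overline{(bc/d)n}\,\bar r\ell/t}$ (that device works only for $R_1$).

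A secondary issue: the truncation of the $\ell$-sum at $\ell\ll q^{\epsilon_1}T_1^2bcN/(d^2g^2g_1Y)$ cannot be deduced from ``absolute convergence of $\int|\gamma(s)|\,|ds|$ together with smoothness.'' On the line $\Re s=-\sigma$ the Mellin integrand carries $\ell^{\sigma/2}$ and the $\ell$-sum is not absolutely summable without a cutoff; the truncation must come from oscillation. The paper handles the large-$\ell$ range separately (Case 1.1) by inserting the large-argument asymptotic \eqref{asymY0} for $Y_0$ and integrating by parts repeatedly in $x$, where the phase derivative $\asymp\frac1t\sqrt{\ell Y/(bcngg_1x)}$ is large precisely when $\ell$ exceeds that threshold. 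You would need to supply this stationary-phase step before the Mellin/large-sieve argument can be run on the remaining range.
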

We square the absolute value in Equation \eqref{F2eqbeforecase} and obtain that
\es{\label{F2maineq} \mathcal F_2(T_1, H_1; q) &\ll \frac{Y}{qT_1^4 (bc)^2} \sum_{\substack{d| bc \\ (d,q)=1}} d^2 \sum_{ \substack{g \leq T/d \\ (g, q) = 1}} \sum_{\substack{g_1 \leq \frac{T}{dg} \\ (g_1, q) = 1}} \sum_{\substack{t \sim \frac{T_1}{dgg_1} \\ (t, q\frac{bc}{d}) = 1 }}\sumstar_{r \mod {tdgg_1}} \sumstar_{r' \mod {tdgg_1}}  \\
	&\times \sumtwo_{\substack{n, n' \\ (nn', t) = 1} } \frac{\sigma_2(ng)}{n} \Psi\left( \frac {ng}{N}\right)  \frac{\sigma_2(n'g)}{n'} \Psi\left( \frac {n'g}{N}\right) \\
&\times	\sumtwo_{\substack{\ell, \ell' \\ (\ell \ell', t) = 1}}\sigma_2(\ell g_1) \e{\frac{-\overline {\frac{bc}{d}n} \bar r \ell}{t}} \sigma_2(\ell' g_1) \e{\frac{-\overline {\frac{bc}{d}n} \bar r' \ell'}{t}} \int_0^\infty \int_0^\infty Y_0 \pr{\frac{4\pi }{t} \sqrt{\frac{\ell xY}{bcngg_1}}} \\
&\times Y_0 \pr{\frac{4\pi }{t} \sqrt{\frac{\ell' x'Y}{bcn'gg_1}}}   \Psi_3\pr{x} \Psi_3\pr{x'}  \mathcal S(H_1, x, x', t) \>dx \> dx', }
where 
\est{\mathcal S(H_1, x, x', t) := \sum_{h} \Psi\left( \frac h{H_1}\right)  J_{k-1}\pr{\frac{4\pi}{tdgg_1} \sqrt{\frac{hxY}{q}}} J_{k-1}\pr{\frac{4\pi}{tdgg_1} \sqrt{\frac{hx'Y}{q}}}  \e{\frac{h\bar q (\bar r - \bar {r}')}{tdgg_1}} . }

 Since $\frac{1}{T_1} \sqrt{\frac{H_1Y}{q}} \ll 1,$ we can treat the Bessel function $J_{k-1}(x)$ as a smooth function.  Using Equation (\ref{asympJxSm}) for $J_{k-1}(x)$, we obtain that $\mathcal S(H_1, x, x', t)$ is

\est{\sum_{\alpha = 0}^\infty  \frac{(-1)^\alpha}{\alpha ! (\alpha + k - 1)!} \pr{\frac{4\pi}{tdgg_1} \sqrt{\frac{xYH_1}{q}} }^{2\alpha + k - 1} \pr{\frac{4\pi}{tdgg_1} \sqrt{\frac{x'YH_1}{q}} }^{2\alpha + k - 1} \sum_{h} \mathscr F_\alpha\pr{\frac h {H_1}} \e{\frac{h\bar q (\bar r - \bar r')}{tdgg_1}}, }
where 
$$ \mathscr F_\alpha\pr{\frac h {H_1}}  := \Psi\left( \frac{h}{H_1}\right) \pr{\frac{h}{H_1}}^{2\alpha + k -1}. $$
Note that $\mathscr F_\alpha (x) $ is compactly supported and smooth.  Applying Poisson summation to the sum over $h$, we have that 

\est{&\sum_{h} \mathscr F_\alpha\pr{\frac h {H_1}} \e{\frac{h\bar q (\bar r - \bar r')}{tdgg_1}}  
	= H_1 \sum_{\beta \equiv - \bar q(\bar r - \bar r') \mod {tdgg_1}} \int_{\mathbb R} \mathscr F_\alpha \left( y \right) \e{-\frac{\beta H_1 y}{tdgg_1}} \> dy. }

When $|\beta| \gg \frac{T_1}{H_1} q^\epsilon$, we do integration by parts many times to see that the contribution from these terms is negligible. Hence we focus only on those terms for which $|\beta | \ll \frac{T_1}{H_1}q^\epsilon$. 
 Thus bounding the terms inside the sum over $\beta$ trivially, we have
\es{\label{Fseq1} \mathcal F_2&(T_1, H_1; q) \ll \frac{YH_1q^\epsilon}{qT_1^4 (bc)^2} \sum_{\substack{d| bc \\ (d,q)=1}} d^2 \sum_{ \substack{g \leq T/d \\ (g, q) = 1}} \sum_{\substack{g_1 \leq \frac{T}{dg} \\ (g_1, q) = 1}}  \sum_{\beta \ll \frac{T_1}{H_1} q^\epsilon} \sum_{\alpha = 0}^\infty  \frac{1}{\alpha ! (\alpha + k - 1)!} \left(\frac{1}{T_1} \sqrt{\frac{YH_1}{q}} \right)^{4\alpha + 2k - 2} \\
	&\times  \sum_{\substack{t \sim \frac{T_1}{dgg_1} \\ (t, q\frac{bc}{d}) = 1 }}  \sumstar_{\substack{r \mod {tdgg_1} \\ (\bar r + \beta q, tdgg_1) = 1}}   \bigg| \sumtwo_{\substack{n, n' \\ (nn', t) = 1} } \frac{\sigma_2(ng)}{n} \Psi\left( \frac {ng}{N}\right)  \frac{\sigma_2(n'g)}{n'} \Psi\left( \frac {n'g}{N}\right)\\
	&\times \sumtwo_{\substack{\ell, \ell' \\ (\ell \ell', t) = 1}}\sigma_2(\ell g_1) \e{\frac{-\overline {\frac{bc}{d}n} \bar r \ell}{t}} \sigma_2(\ell' g_1) \e{\frac{-\overline {\frac{bc}{d}n} (\bar r + \beta q) \ell'}{t}}\\
	& \times \int_0^\infty \int_0^\infty Y_0 \pr{\frac{4\pi }{t} \sqrt{\frac{\ell xY}{bcngg_1}}}  Y_0 \pr{\frac{4\pi }{t} \sqrt{\frac{\ell' x'Y}{bcn'gg_1}}}    \Psi_4\pr{x} \Psi_4\pr{x'}  \> dx \> dx' \bigg| \\
	&\ll  \frac{YH_1q^\epsilon}{qT_1^4 (bc)^2} \sum_{\substack{d| bc \\ (d,q)=1}} d^2 \sum_{ \substack{g \leq T/d \\ (g, q) = 1}} \sum_{\substack{g_1 \leq \frac{T}{dg} \\ (g_1, q) = 1}}  \sum_{\beta \ll \frac{T_1}{H_1} q^\epsilon} \sum_{\alpha = 0}^\infty  \frac{1}{\alpha ! (\alpha + k - 1)!} \left(\frac{1}{T_1} \sqrt{\frac{YH_1}{q}} \right)^{4\alpha + 2k - 2} \\
	& \times \sum_{\substack{t \sim \frac{T_1}{dgg_1} \\ (t, q\frac{bc}{d}) = 1 }}  \sumstar_{\substack{r \mod {tdgg_1} \\ (\bar r + \beta q, tdgg_1) = 1}} (\mathcal A + \mathcal A'),} 
where $\Psi_4(x) := \Psi_4(x, \alpha) = {x^{\alpha + k/2 - 1}} \Psi_3(x)$ is also compactly supported and smooth and moreover,
\es{ \label{Fseq2}\mathcal A := &\bigg| \sum_{\substack{n \\ (n, t) = 1} } \frac{\sigma_2(ng)}{n} \Psi\left( \frac {ng}{N}\right)  \sum_{\substack{\ell \\ (\ell , t) = 1}}\sigma_2(\ell g_1) \e{\frac{-\overline {\frac{bc}{d}n} \bar r \ell}{t}}  \int_0^\infty Y_0 \pr{\frac{4\pi }{t} \sqrt{\frac{\ell xY}{bcngg_1}}}   \Psi_4\pr{x}  \> dx \bigg|^2 , }
and 
\est{ \mathcal A' := & \bigg| \sum_{\substack{n \\ (n, t) = 1} } \frac{\sigma_2(ng)}{n} \Psi\left( \frac {ng}{N}\right)  \sum_{\substack{\ell \\ (\ell , t) = 1}}\sigma_2(\ell g_1) \e{\frac{-\overline {\frac{bc}{d}n} (\bar r + \beta q) \ell}{t}}  \int_0^\infty Y_0 \pr{\frac{4\pi }{t} \sqrt{\frac{\ell xY}{bcngg_1}}}  \Psi_4\pr{x}  \> dx \bigg|^2.}
By a change of variables, we see that both 
$$\sum_{\substack{t \sim \frac{T_1}{dgg_1} \\ (t, q\frac{bc}{d}) = 1 }}  \sumstar_{\substack{r \mod {tdgg_1} \\ (\bar r + \beta q, tdgg_1) = 1}} \mathcal A$$ and $$\sum_{\substack{t \sim \frac{T_1}{dgg_1} \\ (t, q\frac{bc}{d}) = 1 }}  \sumstar_{\substack{r \mod {tdgg_1} \\ (\bar r + \beta q, tdgg_1) = 1}} \mathcal A'$$ 
are bounded by 
\est{ dgg_1\sum_{\substack{t \sim \frac{T_1}{dgg_1}  }}  \sumstar_{\substack{r \mod {t} }} & \bigg| \sum_{\substack{n \\ (n, t) = 1} } \frac{\sigma_2(ng)}{n} \Psi\left( \frac {ng}{N}\right)  \sum_{\substack{\ell \\ (\ell , t) = 1}}\sigma_2(\ell g_1) \e{\frac{\bar {n}  r \ell}{t}} \\
	& \int_0^\infty Y_0 \pr{\frac{4\pi }{t} \sqrt{\frac{\ell xY}{bcngg_1}}}   \Psi_4\pr{x}  \> dx \bigg|^2,}
independently of $\beta$.

Therefore it is enough to consider 
\es{\label{Fseq3} \mathcal G_b(T_1, H_1; q) &:= \frac{Yq^\epsilon}{qT_1^3 (bc)^2} \sum_{\substack{d| bc \\ (d,q)=1}} d^3 \sum_{ \substack{g \leq T/d \\ (g, q) = 1}} g \sum_{\substack{g_1 \leq \frac{T}{dg} \\ (g_1, q) = 1}} g_1  \sum_{\alpha = 0}^\infty  \frac{1}{\alpha ! (\alpha + k - 1)!} \left(\frac{1}{T_1} \sqrt{\frac{YH_1}{q}} \right)^{4\alpha + 2k - 2} \\
	& \times \sum_{\substack{t \sim \frac{T_1}{dgg_1}  }}  \sumstar_{\substack{r \mod {t} }} \bigg| \sum_{\substack{n \\ (n, t) = 1} } \frac{\sigma_2(ng)}{n} \Psi\left( \frac {ng}{N}\right)  \sum_{\substack{\ell \\ (\ell , t) = 1}}\sigma_2(\ell g_1) \e{\frac{\bar {n} r \ell}{t}} \\
	&\hskip 2in \times \int_0^\infty Y_0 \pr{\frac{4\pi }{t} \sqrt{\frac{\ell xY}{bcngg_1}}}   \Psi_4\pr{x}  \> dx \bigg|^2 }
and show that 
$$ \mathcal G_b(T_1, H_1; q) \ll q^\epsilon.$$
To prove this bound, we consider 3 cases depending on the range of $\ell$ and write 
$$ \mathcal G_b(T_1, H_1; q) \ll \mathcal G_{b,1}(T_1, H_1; q) + \mathcal G_{b, 2}(T_1, H_1; q), $$
where $\mathcal G_{b, i}(T_1, H_1; q)$ has the same expression as $\mathcal G_b(T_1, H_1; q)$ but restricting the size of $\ell$ to $\ell \gg q^{\epsilon_1} \frac{T_1^2bcN}{d^2g^2g_1Y}$ and $\ell \ll q^{\epsilon_1} \frac{T_1^2bcN}{d^2g^2g_1Y}$ respectively.

\subsubsection*{Case 1.1: $\ell \gg q^{\epsilon_1} \frac{T_1^2bcN}{d^2g^2g_1Y}$} In this case, we use the expression for $Y_0(x)$ in Equation (\ref{asymY0}) and consider
\est{\sum_{\substack{n \\ \pr{n, t} = 1}} \frac{\sigma_2(ng) }{n } \Psi\left( \frac {ng}{N}\right) \sum_{\substack{\ell \gg q^{\epsilon_1} \frac{T_1^2bcN}{d^2g^2g_1Y} \\ (\ell, t) = 1}} \sigma_2(\ell g_1) \e{\frac{\bar {n}  r \ell}{t}}   \left( t \sqrt{\frac{bcngg_1}{\ell  Y}}\right)^{1/2} \int_0^\infty \e{ \pm\frac{2}{t} \sqrt{\frac{\ell  xY}{bcngg_1}}}  \frac{\Psi_4\pr{x}}{x^{1/4}}  \> dx. }
 
 We can then integrate by parts many times with respect to $x$ and obtain that 
 $ \mathcal G_{b,1}(T_1, H_1; q)  \ll q^{-100}.$
 
\subsubsection*{Case 1.2: $\ell \ll q^{\epsilon_1} \frac{T_1^2bcN}{d^2g^2g_1Y}$}
 Using the integral expression for $Y_0$ in Equation (\ref{Y0int}), we have for $0 < \sigma < 1$
\es{\label{eqn:Y0writeint} Y_0 \pr{\frac{4\pi}{t} \sqrt{\frac{\ell  x Y}{bcngg_1}}} = \int_{(-\sigma)} \gamma(s) \pr{\frac{4\pi}{t} \sqrt{\frac{\ell x Y}{bcngg_1}}}^{-s} \> ds \ + \frac{2}{\pi}\ln\pr{\frac{4\pi}{t} \sqrt{\frac{ x Y}{bcgg_1}}} + \frac{1}{\pi} \log \ell - \frac 1\pi \log n  + \kappa.}
For brevity, we deal only with the integral term, the other terms being slightly easier. The sum over $t$ from this integral of $\mathcal G_{b, 2}(T_1, H_1; q)$ 
can be written as
\est{& \sum_{\substack{t \sim \frac{T_1}{dgg_1}  \\ }}  \sumstar_{\substack{r \mod {t} }} \bigg| \int_0^\infty \Psi_3\pr{x} \int_{(-\sigma)} \gamma(s)  \pr{\frac{tdgg_1}{4\pi T_1\sqrt x}  }^s \sum_{\substack{n \\ \pr{n, t} = 1}} \frac{\sigma_2(ng)  }{n  } \pr{\frac{n}{N/g}}^{s/2} \Psi\left( \frac {ng}{N_1}\right)\\
	& \hskip 1in \times  \sum_{\substack{\ell \ll q^{\epsilon_1} \frac{T_1^2bcN}{d^2g^2g_1Y} \\ \pr{\ell, t} = 1}} \sigma_2(\ell g_1) \pr{\frac{\tfrac{T_1^2bcN}{d^2g^2g_1Y}}{\ell}}^{s/2} \e{\frac{\bar {n}  r \ell}{t}}       \> ds \> dx \bigg|^2 .}
By Cauchy-Schwarz's inequality and the large sieve inequality in Lemma \ref{lem:largesieve}, we obtain that the above is bounded by
\est{
	&\ll \int_0^\infty |\Psi_3\pr{x}| \int_{(-\sigma)} |\gamma(s)| \sum_{\substack{t \sim \frac{T_1}{dgg_1}  }}  \sumstar_{\substack{r \mod {t} }} \bigg| \sum_{\substack{n \\ \pr{n, t} = 1}} \frac{\sigma_2(ng)  }{n  } \pr{\frac{n}{N/g}}^{s/2} \Psi\left( \frac {ng}{N}\right) \\
	&  \hskip 1in \times  \sum_{\substack{ \ell \ll q^{\epsilon_1} \frac{T_1^2bcN}{d^2g^2g_1Y} \\ \pr{\ell, t} = 1}} \sigma_2(\ell g_1) \pr{\frac{\tfrac{T_1^2bcN}{d^2g^2g_1Y}}{\ell}}^{s/2} \e{\frac{\bar {n}  r \ell}{t}}       \bigg|^2  \> |ds| \> dx \\
	&\ll q^\epsilon \pr{ \frac{T_1^2}{d^2g^2g_1^2} + \frac{N}{g} \frac{T_1^2bcN}{d^2g^2g_1Y} } \pr{ \frac{g}{N}} \pr{\frac{T_1^2bcN}{d^2g^2g_1Y}  } \ll  q^\epsilon \frac{T_1^4bc}{d^4g^3g_1^2Y} .}
Therefore, 
\est{ 	\mathcal G_{b, 2}(T_1, H_1; q) \ll \frac{Yq^\epsilon}{qT_1^3 (bc)^2} \sum_{d| bc} d^3 \sum_{g \leq \frac{T_1}{d}} g  \sum_{g_1 \leq \frac{T_1}{dg}} g_1 \frac{T_1^4bc}{d^4g^3g_1^2Y}   \ll q^\epsilon . }

From Case 1.1 - 1.2, we deduce that
$ \mathcal G_b(T_1, H_1 ; q) \ll q^\epsilon$
and conclude that 
$$ \mathcal F_2(T_1, H_1; q) \ll q^\epsilon$$
as desired.

\subsection*{Case 2: $\frac{1}{q^{\epsilon_1}} \sqrt{\frac{H_1Y}{q}} \ll T_1 \ll  \sqrt{\frac{H_1Y}{q}}$}

For this case, we will show that
\begin{lem} \label{lem:F2T1medium} Let $\frac{1}{q^{\epsilon_1}} \sqrt{\frac{H_1Y}{q}} \ll T_1 \ll  \sqrt{\frac{H_1Y}{q}}$. Then 
	$$ \mathcal F_2(T_1, H_1 ; q) \ll q^\epsilon,$$
	where the implied constant depends on $\epsilon.$
	
\end{lem}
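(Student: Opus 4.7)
The plan is to adapt the argument of Lemma \ref{lem:F2T1big} (Case 1) to the transition regime. The key observation is that although the argument $\frac{4\pi}{tdgg_1}\sqrt{hxY/q}$ of $J_{k-1}$ is now of size $q^{O(\epsilon_1)}$ rather than bounded by an absolute constant, it is still bounded by any positive power of $q$. Consequently, the Taylor series expansion (\ref{asympJxSm}) continues to converge absolutely thanks to the factorials $\alpha!(\alpha+k-1)!$ in the denominators, and the full sum over $\alpha$ satisfies
\[
\sum_{\alpha \geq 0}\frac{1}{\alpha!(\alpha+k-1)!}\left(\frac{1}{T_1}\sqrt{\frac{YH_1}{q}}\right)^{4\alpha+2k-2} \ll q^{O(\epsilon_1)},
\]
dominated by the $\alpha = 0$ term.

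With the Taylor expansion in place, I would then apply Poisson summation to the $h$-sum exactly as in Case 1, truncating the dual variable $\beta$ to the range $|\beta| \ll q^\epsilon T_1/H_1$ via integration by parts for larger $|\beta|$. The resulting Cauchy-Schwarz step, which removes the dependence on $\beta$ in the inner sum, goes through unchanged and reduces the problem to bounding a quantity structurally identical to $\mathcal{G}_b(T_1, H_1; q)$, now carrying an additional $q^{O(\epsilon_1)}$ factor from the Bessel expansion.

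Next, I would split the sum over $\ell$ at the cutoff $q^{\epsilon_1}\frac{T_1^2 bc N}{d^2 g^2 g_1 Y}$, precisely as in Case 1.1 and Case 1.2. For large $\ell$, the asymptotic (\ref{asymY0}) of $Y_0$ introduces an oscillating exponential, and repeated integration by parts in the $x$-integral produces an arbitrary power saving, rendering this range negligible. For small $\ell$, I would use the Mellin representation (\ref{Y0int}) of $Y_0$ to separate the $\ell$-variable from $t$ and $x$, then apply Cauchy-Schwarz in the Mellin integral and invoke the $GL(1)$ large sieve Lemma \ref{lem:largesieve} exactly as in Case 1.2. The bound depends on the cutoff $\frac{T_1^2 bc N}{d^2 g^2 g_1 Y}$ only as an abstract parameter, so the same numerology yields the desired estimate modulo a further $q^{O(\epsilon_1)}$ loss.

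The main obstacle is the careful bookkeeping of the accumulated $q^{O(\epsilon_1)}$ factors arising from the Bessel series sum over $\alpha$, the slightly enlarged $\ell$-cutoff, and the Mellin contour. Since all such factors are of the form $q^{C\epsilon_1}$ for some absolute constant $C$ depending only on $k$, choosing $\epsilon_1$ sufficiently small with respect to $\epsilon$ absorbs all losses into $q^\epsilon$, yielding $\mathcal{F}_2(T_1, H_1; q) \ll q^\epsilon$ as claimed.
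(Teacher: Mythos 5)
There is a genuine gap at the very first step. Your displayed claim
\[
\sum_{\alpha \geq 0}\frac{1}{\alpha!(\alpha+k-1)!}\left(\frac{1}{T_1}\sqrt{\frac{YH_1}{q}}\right)^{4\alpha+2k-2} \ll q^{O(\epsilon_1)}
\]
is false in this regime, and the sum is not dominated by the $\alpha=0$ term. Writing $x=\frac{1}{T_1}\sqrt{YH_1/q}$, the hypothesis of the lemma gives $1\ll x\ll q^{\epsilon_1}$, so the left side is $\sum_\alpha x^{4\alpha+2k-2}/(\alpha!(\alpha+k-1)!)$, which is essentially $I_{k-1}(2x^2)\sim e^{2x^2}/\sqrt{4\pi x^2}$. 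For $x\asymp q^{\epsilon_1}$ this is $\exp(cq^{2\epsilon_1})$, which exceeds every fixed power of $q$ no matter how small the fixed constant $\epsilon_1>0$ is chosen; it cannot be absorbed into $q^\epsilon$ by the bookkeeping in your last paragraph. This is exactly the reason the paper separates Case 1 (where $x\ll 1$ and the power series with term-by-term absolute values is legitimate, the tail being controlled by the factorials) from Case 2: once the Bessel argument can exceed a bounded quantity, taking absolute values term by term in \eqref{asympJxSm} replaces the oscillating $J_{k-1}$ by the exponentially growing $I_{k-1}$, and all cancellation is lost before the large sieve is ever applied.

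The paper's actual treatment of this transition range keeps the oscillation explicit: it expands $J_{k-1}$ by the large-argument asymptotic \eqref{asympJxbig}, producing a prefactor $\asymp T_1\sqrt{q/(H_1Y)}\leq 1$ together with phases $\e{\pm \tfrac{2}{tdgg_1}\sqrt{hxY/q}}$. Poisson summation in $h$ then requires a stationary-phase analysis, and the dual variable is truncated at the larger threshold $|\beta|\ll q^\epsilon\sqrt{Y/(H_1q)}$ rather than your $q^\epsilon T_1/H_1$. Moreover, the $\ell$-split must be moved to $\ell\asymp q^{\epsilon_1}\tfrac{H_1Nbc}{qd^2g^2g_1}$ (not the Case 1 cutoff you propose): in the large-$\ell$ range one needs the $Y_0$-phase to dominate the new $J$-phase $\e{\pm\tfrac{2}{tdgg_1}\sqrt{zxYH_1/q}}$ for the integration by parts in $x$ to succeed, and only in the small-$\ell$ range (which, by the two-sided hypothesis on $T_1$, is within $q^{O(\epsilon_1)}$ of the Case 1 cutoff) does the Mellin representation \eqref{Y0int} plus the $GL(1)$ large sieve argument of Case 1.2 go through. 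Your reduction to a quantity ``structurally identical to $\mathcal G_b$'' therefore skips the essential new input of this case.
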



We first consider the sum over $h$ in Equation \eqref{F2maineq}. Using Equation \eqref{asympJxbig} to express $J_{k-1}(x)$, we have
\est{\mathcal S(H_1, x, x', t) = \mathcal S_1 (H_1, x, x', t) + \mathcal S_2 (H_1, x, x', t)  + \mathcal S_3 (H_1, x, x', t)  + \mathcal S_4 (H_1, x, x', t) ,}
where each $\mathcal S_i(H_1, x, x', t)$ is of the form 
\est{   c_k tdgg_1 \sqrt{\frac{q}{H_1Y\sqrt{x x'}}} \sum_{h} \Psi_5\left( \frac h{H_1}\right) \e{ \pm \frac{2}{tdgg_1} \sqrt{\frac{hxY}{q}} \pm \frac{2}{tdgg_1} \sqrt{\frac{hx'Y}{q}} }    \e{\frac{h\bar q (\bar r - \bar {r}')}{tdgg_1}},}
for some choice of the sign $\pm$, some constant $c_k$  and where $\Psi_5(x)$ is a product of $\Psi(x)/\sqrt x$ and $W$ or $\overline W$. In particular, $\Psi_5(x)$ is smooth and compactly supported. Similar to Case 1, we apply Poisson summation formula to the sum over $h$ and derive that 
\es{\label{eqn:possoinsumh} \mathcal S_i (H_1, x, x', t) &=  c_k tdgg_1 \sqrt{\frac{q}{H_1Y\sqrt{x x'}}}  H_1 \sum_{\beta \equiv - \bar q (\bar r - \bar r') \mod {tdgg_1}} \int_{\mathbb R} \Psi_5(z) \\
	&\ \ \ \ \ \times  \e{ \pm \frac{2}{tdgg_1} \sqrt{\frac{zx H_1Y}{q}} \pm \frac{2}{tdgg_1} \sqrt{\frac{zx' H_1Y}{q}} } \e{- \frac{\beta H_1 z}{tdgg_1}} \> dz. }
If $|\beta| \gg q^\epsilon \sqrt{\frac{Y}{H_1q}} $, then $|\beta| \gg q^{\epsilon} \frac {T_1}{H_1}$ since $ T_1 \ll \sqrt{\frac{H_1Y}{q}}.$ We do integration by parts many times 
and obtain that the contribution from these terms is $\ll q^{-100}.$ So we focus only terms from $|\beta| \ll q^\epsilon \sqrt{\frac{Y}{H_1q}},$ 
Similar to Case 1, the contribution from these terms  can be bounded by 


\es{\label{case2eq2} \mathcal G_m(T_1, H_1; q) &:= \frac{q^\epsilon YH_1}{qT_1^4(bc)^2} T_1 \sqrt{\frac q {H_1 Y}} \sqrt{\frac Y{H_1q}}\int_{\mathbb R} |\Psi_5(z) |  \sum_{d| bc} d^3 \sum_{g \leq \frac{T_1}{d}} g  \sum_{g_1 \leq \frac{T_1}{dg}} g_1 \sum_{\substack{t \sim \frac{T_1}{dgg_1}  }}   \sumstar_{\substack{r \mod t }} \\
	&\times  \bigg| \sum_{n } \frac{\sigma_2(ng)}{n} \Psi\left( \frac {ng}{N}\right) \sum_{\ell \geq 1} \sigma_2(\ell g_1) \e{\frac{\bar n  r \ell}{t}}  \int_0^\infty Y_0 \pr{\frac{4\pi}{t} \sqrt{\frac{\ell Yx}{bcngg_1}}} \\
	&\times \e{\pm \frac{2}{tdgg_1} \sqrt{\frac{zxYH_1}{q}}} \Psi_6\pr{x}  \> dx \bigg|^2  \> dz,}
where $\Psi_6(x) = \Psi_3(x)/\sqrt x.$ Note the factor in front of integral can be simplified to $\frac{q^\epsilon Y}{qT_1^3 (bc)^2}$. In order to show that $$ \mathcal G_{m}(T_1, H_1; q) \ll q^\epsilon,$$
 we separate into 2 cases depending on the range of $\ell$. We write 
\es{\label{Gm} \mathcal G_m(T_1, H_1; q) \ll \mathcal G_{m,1}(T_1, H_1; q) + \mathcal G_{m, 2}(T_1, H_1; q), }
where $\mathcal G_{m, i}(T_1, H_1; q)$ has the same expression as $\mathcal G_m(T_1, H_1; q)$ but restricting the size of $\ell$ to $\ell \gg q^{\epsilon_1} \frac{H_1Nbc}{qd^2g^2g_1}$ and $ \ell \ll q^{\epsilon_1} \frac{H_1Nbc}{qd^2g^2g_1}$.

\subsubsection*{Case 2.1}   $\ell \gg q^{\epsilon_1} \frac{H_1Nbc}{qd^2g^2g_1}. $ 
We use the expression for $Y_0(x)$ in Equation (\ref{asymY0}) and write the integral inside the absolute value of Equation \eqref{case2eq2} as
\es{\label{eqcase2.1} \pr{t \sqrt{\frac{bcngg_1}{\ell Y}}}^{1/2} \int_0^\infty \frac{1}{x^{1/4}}\e {\pm \frac{2}{t} \sqrt{\frac{\ell Yx}{bcngg_1}}} \e{\pm \frac{2}{tdgg_1} \sqrt{\frac{zxYH_1}{q}}} \Psi_6\pr{x}  \> dx.}

From the fact that $T_1 \ll  \sqrt{\frac{H_1Y}{q}},$
$\frac{H_1Nbc}{qd^2g^2g_1} \gg \frac{T_1^2bcN}{d^2g^2g_1 Y}, $ and so
$ \ell \gg q^{\epsilon_1} \frac{T_1^2bcN}{d^2g^2g_1 Y}. $ We then integrate by parts many times 
 and obtain that $\mathcal G_{m, 1} (T_1, H_1; q) \ll q^{-100}.$

\subsubsection*{Case 2.2: $\ell  \ll {q^{\epsilon_1}} \frac{NH_1bc}{qd^2g^2g_1}$}

Since $\frac{1}{q^{\epsilon_1}}\sqrt{\frac{H_1Y}{q}} \ll T_1 \ll \sqrt{\frac{H_1Y}{q}}$, then 
\es{ \label{eqn:ellrange1} \ell \ll q^{\epsilon_1} \frac{NH_1bc}{qd^2g^2g_1} \ll  q^{3\epsilon_1}\frac{T_1^2bcN}{d^2g^2g_1Y} .}

 We write $Y_0(x)$ using the integral representation in Equation \eqref{eqn:Y0writeint}. By the condition on $\ell$, we can treat $\mathcal G_{m, 2}(T_1, H_1; q)$ in the same manner as Case 1.2 and obtain that 
 $$ \mathcal G_{m, 2}(T_1, H_1; q) \ll q^{\epsilon}.$$
 
From Case 2.1-2.2 and Equation \eqref{Gm}, we conclude that $ \mathcal G_m(T_1, H_1; q) \ll q^{\epsilon}$
and this prove Lemma \ref{lem:F2T1medium}.

\subsection*{Case 3: $T_1 \ll \frac{1}{q^{\epsilon_1}} \sqrt{\frac{H_1Y}{q}}  $}
For this case, we show that
\begin{lem} \label{lem:F2T1small} Let $T_1 \ll \frac{1}{q^{\epsilon_1}} \sqrt{\frac{H_1Y}{q}}  $. Then 
	$$ \mathcal F_2(T_1, H_1 ; q) \ll q^\epsilon,$$
	where the implied constant depends on $\epsilon.$
	
\end{lem}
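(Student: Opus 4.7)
The plan is as follows. Since $T_1 \ll q^{-\epsilon_1}\sqrt{H_1Y/q}$, the argument $(4\pi/(tdgg_1))\sqrt{hxY/q}$ of $J_{k-1}$ in the definition of $\mathcal S(H_1,x,x',t)$ has size $(1/T_1)\sqrt{H_1Y/q} \gg q^{\epsilon_1}$, placing us in the strongly oscillatory regime. Following the first two steps of Case 2, I would apply the asymptotic \eqref{asympJxbig} to split $\mathcal S = \mathcal S_1 + \cdots + \mathcal S_4$, and then apply Poisson summation in $h$ exactly as in \eqref{eqn:possoinsumh}. Non-stationary values of the dual variable $\beta$ are killed by repeated integration by parts in $z$, restricting the effective range to $|\beta| \ll q^\epsilon \sqrt{Y/(H_1q)}$ and yielding an expression of the same shape as $\mathcal G_m$ in \eqref{case2eq2}.

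The decisive new feature of Case 3 is that the residual Poisson-phase factor $e(\pm(2/(tdgg_1))\sqrt{zxH_1Y/q})$ appearing inside the $x$-integral has $x$-derivative of size $\sqrt{H_1Y/q}/T_1 \gg q^{\epsilon_1}$. Splitting the $\ell$-sum at the stationary threshold $\ell_0 := H_1bcN/(qd^2g^2g_1)$, I would handle the two extreme ranges by integration by parts in $x$. For $\ell \gg q^{\epsilon_1}\ell_0$, the $Y_0$-asymptotic \eqref{asymY0} gives the dominant $x$-oscillation and repeated IBP in $x$ yields a contribution of $O(q^{-100})$, just as in Case 2.1. For $\ell \ll q^{-\epsilon_1}\ell_0$, the $Y_0$-function is replaced by its Mellin representation \eqref{Y0int}, the resulting amplitude is smooth in $x$, and IBP in $x$ against the Poisson phase again gives $O(q^{-100})$; this mechanism is genuinely new in Case 3 because in Cases 1 and 2 the Poisson phase was too weakly oscillating for this argument to run.

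The remaining contribution comes from the narrow band $q^{-\epsilon_1}\ell_0 \ll \ell \ll q^{\epsilon_1}\ell_0$ in which the $x$-phase is stationary. Here I would perform stationary phase in the $z$-integral before bounding anything. For the concordant sign choices $(++)$ and $(--)$ the stationary point lies at $\beta \asymp \sqrt{Y/(H_1q)}$ and the stationary-phase formula produces an extra factor of $\sqrt{T_1/\sqrt{H_1Y/q}} \ll q^{-\epsilon_1/2}$; the remainder is then estimated by expanding $Y_0$ via \eqref{Y0int}, applying Cauchy--Schwarz, and invoking Lemma \ref{lem:largesieve} exactly as in Case 2.2, with the $\ell$-range now comparable to the one there. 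For the discordant modes $(+-)$ and $(-+)$ the stationary point collapses to $|\beta| \asymp \sqrt{Y/(H_1q)}\,|\sqrt x - \sqrt{x'}|$, so the contribution is localized near the diagonal $x = x'$ and produces even further savings.

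The main obstacle is precisely this narrow band $\ell \sim \ell_0$: none of the phases is strongly oscillating on its own, so the final bound must come from a careful combination of the stationary-phase gain $q^{-\epsilon_1/2}$ with the $\ell$-range shortening by $q^{\epsilon_1}$, balanced against the large-sieve input. Choosing $\epsilon_1$ small enough that these gains and losses combine favorably, and verifying that the lower-order logarithmic and Mellin-type contributions coming from \eqref{Y0int} are absorbed at each stage, one concludes $\mathcal F_2(T_1, H_1; q) \ll q^\epsilon$, completing the proof of Lemma \ref{lem:F2T1small}.
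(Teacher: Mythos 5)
Your setup (asymptotic expansion of $J_{k-1}$ via \eqref{asympJxbig}, Poisson summation in $h$ as in \eqref{eqn:possoinsumh}, truncation to $|\beta|\ll q^{\epsilon}\sqrt{Y/(H_1q)}$, and the elimination of the two extreme $\ell$-ranges by integration by parts in $x$) matches the paper's Case 3 in substance. The genuine gap is in the narrow band $\ell\asymp\ell_0=H_1Nbc/(qd^2g^2g_1)$, which is where all the work lies. You propose to extract a stationary-phase gain of $q^{-\epsilon_1/2}$ from the $z$-integral and then run Cauchy--Schwarz plus the large sieve (Lemma \ref{lem:largesieve}) ``exactly as in Case 2.2, with the $\ell$-range now comparable to the one there.'' The $\ell$-range is not comparable: in Case 3 one has $H_1/q\gg q^{2\epsilon_1}T_1^2/Y$, so $\ell_0$ exceeds the Case 2.2 threshold $T_1^2bcN/(d^2g^2g_1Y)$ by the factor $H_1Y/(qT_1^2)$, which is unbounded as $T_1$ decreases and can be a large power of $q$. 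A direct large-sieve bound over the full band $\ell,\ell'\asymp\ell_0$ therefore loses a power of $q$ that no $q^{-\epsilon_1/2}$ gain can recover. Moreover, discarding the oscillation of $Y_0$ in this band by switching to the Mellin representation \eqref{Y0int} throws away exactly the structure needed: in the band the argument of $Y_0$ is $\gg q^{\epsilon_1}$, and its phase is what couples $(\ell,n)$ to $(\ell',n')$.

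What the paper actually does in this band, and what is missing from your proposal, is a localization-plus-diagonal argument rather than a pointwise stationary-phase gain. After writing $Y_0$ via \eqref{asymY0}, discarding the sign combinations killed by integration by parts in $x$, and changing variables $\sqrt{x}=u\mp\sqrt{x'}$ (as in \eqref{def:J1}), the $z$-oscillation does not just contribute a square-root factor: it confines $u$ to the short interval $\mathcal I_{z,\beta}$ of length $\asymp T_1\sqrt{q/(H_1Y)}\,q^{\epsilon_1/2}$ (see \eqref{lengthI}). The residual $x'$-phase is then proportional to $\sqrt{\ell/n}-\sqrt{\ell'/n'}$, and integration by parts in $x'$ forces the diagonal condition $|\ell'n-\ell n'|\ll q^{\epsilon_1/2}(N/g)\ell_0R$ of \eqref{eqn:thinregion}, with $R=\frac{T_1}{q^{\epsilon_1/4}}\sqrt{q/(H_1Y)}$. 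One must then tile $(n,n',\ell,\ell')$ by boxes of side $\asymp R$ times the natural scale, observe that only $O(q^{\epsilon_1/2}/R^3)$ of the $\asymp R^{-4}$ boxes meet the diagonal, separate variables inside each box by a Taylor expansion (controlled precisely because the boxes are short), and only then apply Lemma \ref{lem:largesieve} to the short-interval sums $S_1,S_2$. The savings come from the product of the short $u$-interval and the diagonal box count, not from a stationary-phase factor; your proposal contains neither ingredient, so the narrow band remains unbounded as written. A secondary technical point: repeated integration by parts in $x$ after inserting \eqref{Y0int} is problematic, since each integration brings down a factor of $|s|$ and $\int_{(-\sigma)}|\gamma(s)||s|^{k}\,|ds|$ diverges for $k\geq\sigma$; the paper reserves \eqref{Y0int} for the non-oscillatory steps and uses \eqref{asymY0} whenever further $x$-integration by parts is needed.
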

We now divide the range for $\ell$ into 2 ranges, which are  $ \frac 14 \frac{H_1Nbc}{qd^2g^2g_1} < \ell < 3 \frac{H_1Nbc}{qd^2g^2g_1}$ and the rest.

For the second range, we write the Bessel functions $J_{k-1}(x)$ and $Y_0(x)$ as in Equations (\ref{asympJxbig}) and (\ref{asymY0}), respectively. Then, the integral in Equation (\ref{F2eqbeforecase}) is of the form

\est{ \pr{t \sqrt{\frac{bcngg_1}{\ell Y}}}^{1/2} \pr{tdgg_1 \sqrt{\frac{q}{zYH_1}}}^{1/2} \int_0^\infty \frac{1}{x^{1/2}}\e {\pm \frac{2}{t} \sqrt{\frac{\ell Yx}{bcngg_1}}} \e{\pm \frac{2}{tdgg_1} \sqrt{\frac{zxYH_1}{q}}} \Psi_7\pr{x}  \> dx,}
where $\Psi_7(x)$ is a product of $\Psi_3(x)$ and $W$ or $\overline W$.  Since $T_1 \ll \frac{1}{q^{\epsilon_1}} \sqrt{\frac{H_1 Y}{q}}$, 
$$  \frac{2}{tdgg_1} \sqrt{\frac{zYH_1}{q x}}  \gg q^{\epsilon_1}.$$
If both terms inside the exponential function has the same sign then we can  integrate by parts many times and obtain that the contribution from these terms is negligible. However, if the signs are different, then  the size of the derivative of the phase inside the exponential is 
$$ \mathscr E(x) = \left| \frac{1}{t\sqrt x} \sqrt{\frac{\ell Y}{bcngg_1}} - \frac{1}{tdgg_1 \sqrt x} \sqrt{\frac{zYH_1}{q}} \right|. $$
If $\ell \geq 3 \frac{H_1Nbc}{qd^2g^2g_1}$, then 
$$ \mathscr E(x) =  \frac{1}{t\sqrt x} \sqrt{\frac{\ell Y}{bcngg_1}} - \frac{1}{tdgg_1 \sqrt x} \sqrt{\frac{zYH_1}{q}} \geq \pr{\sqrt{3} \sqrt{\frac Nn} - \sqrt 2}\frac{1}{tdgg_1 \sqrt x } \sqrt{\frac{H_1Y}{q}} \gg q^{\epsilon_1} . $$
Similarly, if $\ell \leq \frac{1}{4} \frac{H_1Nbc}{qd^2g^2g_1}$, then 
$$ \mathscr E(x) =    \frac{1}{tdgg_1 \sqrt x} \sqrt{\frac{zYH_1}{q}} - \frac{1}{t\sqrt x} \sqrt{\frac{\ell Y}{bcngg_1}} \geq  \pr{1 -  \frac 12 \sqrt{\frac Nn} }\frac{1}{tdgg_1  \sqrt x} \sqrt{\frac{H_1Y}{q}} \gg q^{\epsilon_1} . $$


 Note moreoever that $\mathscr E(x) \asymp \mathscr E^{(k)}(x)$ for any $k\ge 0$.  We then do integration by parts many times and derive that the contribution from these terms is bounded by $q^{-100}$.  

Now we can focus on terms from $ \frac{1}{4} \frac{H_1Nbc}{qd^2g^2g_1} < \ell  < 3 \frac{H_1Nbc}{qd^2g^2g_1} $. 
After squaring out Equation (\ref{F2eqbeforecase}), we obtain that the contribution from these terms has the same expression as Equation \eqref{F2maineq}, with the additional restriction $\frac{1}{4} \frac{H_1Nbc}{qd^2g^2g_1} < \ell, \ell'  < 3 \frac{H_1Nbc}{qd^2g^2g_1}$.
 
Next, we treat the sum over $h$ similarly to the beginning of Case 2 and obtain the same expression as in Equation  \eqref{eqn:possoinsumh}. Moreover, by the same argument as Case 2, the contribution from $|\beta| \gg q^{\epsilon} \sqrt{\frac Y{H_1q}}$ is negligible. Next, we use Equation (\ref{asymY0}) for $Y_0(x)$ when $|\beta| \ll q^{\epsilon} \sqrt{\frac{Y}{H_1q}}.$  Hence, to bound $\mathcal F_2(T_1, H_1; q)$, it is sufficient to bound


\est{& \mathcal G_s(T_1, H_1 ;q) := \frac{\sqrt {H_1}}{T_1^4 \sqrt{q} (bc)^{3/2}} \sum_{ |\beta| \ll q^{\epsilon}   \sqrt{\frac{Y}{H_1q}}}   \sum_{\substack{d| bc \\ (d,q)=1}} d^3 \sum_{ \substack{g \leq T/d \\ (g, q) = 1}} g^{3/2} \sum_{\substack{g_1 \leq \frac{T}{dg} \\ (g_1, q) = 1}} g_1^{3/2}\sum_{\substack{t \sim \frac{T_1}{dgg_1} \\ (t, q\frac{bc}{d}) = 1 }} t^2 \sumstar_{\substack{r \mod {tdgg_1} \\ (\bar r + \beta q, tdgg_1) = 1}}  \\
	&\times \sumtwo_{\substack{n, n' \\ (nn', t) = 1} } \frac{\sigma_2(ng)}{n^{3/4}} \Psi\left( \frac {ng}{N}\right)  \frac{\sigma_2(n'g)}{n'^{3/4}} \Psi\left( \frac {n'g}{N}\right) \sumtwo_{\substack{ \frac{1}{4} \frac{H_1Nbc}{qd^2g^2g_1} < \ell  < 3 \frac{H_1Nbc}{qd^2g^2g_1} \\ (\ell \ell', t) = 1}}\frac{\sigma_2(\ell g_1)}{\ell^{1/4}} \e{\frac{-\overline {\frac{bc}{d}n} \bar r \ell}{t}} \\
	& \times \frac{\sigma_2(\ell' g_1)}{\ell'^{1/4}} \e{\frac{-\overline {\frac{bc}{d}n'} (\bar r + \beta q) \ell'}{t}} \mathcal J(n, n', \ell, \ell'), }
where  
\est{\mathcal J(n, n', \ell, \ell') := &\int_{0}^\infty \Psi_5(z) \int_0^\infty \int_0^\infty  {\Psi_7\pr{x}}{\Psi_7\pr{x'}}  \e{\pm \frac{2 }{t} \sqrt{\frac{\ell xY}{bcngg_1}} \pm \frac{2 }{t} \sqrt{\frac{\ell' x'Y}{bcn'gg_1}}} \\
	&\times \e{ \pm \frac{2}{tdgg_1} \sqrt{\frac{zx H_1Y}{q}} \pm \frac{2}{tdgg_1} \sqrt{\frac{zx' H_1Y}{q}} } \e{- \frac{\beta H_1 z}{tdgg_1}}   \>dx \> dx' \> dz,}
and $\Psi_7(x)$ is the product between $\frac{\Psi_3(x)}{\sqrt x}$ and $W$ or $\overline W$. Due to the range of $T_1$ and $\ell, \ell'$, we have that
\est{ 
	\frac{2}{tdgg_1} \sqrt{\frac{zx H_1Y}{q}}, \ \frac{2}{tdgg_1} \sqrt{\frac{zx' H_1Y}{q}}\  \gg \ q^{\epsilon_1} }
and
\est{
	 \frac{2 }{t} \sqrt{\frac{\ell xY}{bcngg_1}}, \  \frac{2 }{t} \sqrt{\frac{\ell' x'Y}{bcn'gg_1}} \  \gg \  \frac{1}{tdgg_1} \sqrt{\frac{ H_1Y}{q}}  \ \gg \ q^{\epsilon_1}.}

If the signs in front of $ \frac{2}{tdgg_1} \sqrt{\frac{zx H_1Y}{q}}$ and $\frac{2 }{t} \sqrt{\frac{\ell xY}{bcngg_1}}$ in $\mathcal J(n, n', \ell, \ell')$ are the same, we can integrate by part many times with respect to $x$ and show that the contribution from these terms is negligible. The same is true for the signs in front of $\frac{2}{tdgg_1} \sqrt{\frac{zx' H_1Y}{q}}$ and $ \frac{2 }{t} \sqrt{\frac{\ell' x'Y}{bcn'gg_1}}$. This motivates us to define 
\est{\mathcal J_1(n, n', \ell, \ell') := &\int_{0}^\infty \Psi_5(z) \int_0^\infty \int_0^\infty  \e{ -\frac{2 }{t} \sqrt{\frac{\ell xY}{bcngg_1}} - \frac{2 }{t} \sqrt{\frac{\ell' x'Y}{bcn'gg_1}}} \\
	&\times \e{  \frac{2}{tdgg_1} \sqrt{\frac{zx H_1Y}{q}} + \frac{2}{tdgg_1} \sqrt{\frac{zx' H_1Y}{q}} } \e{- \frac{\beta H_1 z}{tdgg_1}}  {\Psi_7\pr{x}}{\Psi_7\pr{x'}}  \>dx \> dx' \> dz,}
and
\est{\mathcal J_2(n, n', \ell, \ell') := &\int_{0}^\infty \Psi_5(z) \int_0^\infty \int_0^\infty  \e{ -\frac{2 }{t} \sqrt{\frac{\ell xY}{bcngg_1}} + \frac{2 }{t} \sqrt{\frac{\ell' x'Y}{bcn'gg_1}}} \\
	&\times \e{  \frac{2}{tdgg_1} \sqrt{\frac{zx H_1Y}{q}} - \frac{2}{tdgg_1} \sqrt{\frac{zx' H_1Y}{q}} } \e{- \frac{\beta H_1 z}{tdgg_1}}  {\Psi_7\pr{x}}{\Psi_7\pr{x'}}  \>dx \> dx' \> dz.}

By symmetry it is enough to consider only $\mathcal J_1$ and $\mathcal J_2.$   Further, let $\mathcal G_{s, i}(T_1, H_1; q)$ be the same expression as $\mathcal G(T_1, H_1; q)$ but replacing $\mathcal J(T_1, H_1; q)$ by $\mathcal J_i(T_1, H_1; q)$ for $i = 1, 2.$ It now suffices to show that for $i=1, 2$,
$$ \mathcal G_{s, i}(T_1, H_1; q) \ll q^{\epsilon}. $$


\subsubsection*{ Calculation of $\mathcal G_{s, 1}(T_1, H_1; q)$} 
By the change of variables $ \sqrt x = u - \sqrt{x'}$, 
\es{\label{def:J1} \mathcal J_1(n, n', \ell, \ell') 
	&= \int_{0}^\infty \Psi_5(z) \int_0^\infty \int_0^\infty  \e{ -\frac{2 u}{t} \sqrt{\frac{\ell Y}{bcngg_1}}  +\frac{2 }{t} \sqrt{\frac{ x' Y}{bcgg_1}} \pr{ \sqrt{\frac{\ell}{n}} - \sqrt{\frac{\ell'}{n'}}} } \\
	&\times \e{  \frac{2u}{tdgg_1} \sqrt{\frac{z H_1Y}{q}} - \frac{\beta H_1 z}{tdgg_1}}  2(u - \sqrt{x'}){\Psi_7\pr{(u - \sqrt {x'})^2}}{\Psi_7\pr{x'}}  \>du \> dx' \> dz.}
Note that the integrand vanishes when $u$ is outside the range $1 \leq u - \sqrt{x'} \leq \sqrt 2$\ \  because of the support of $\Psi_7(x)$. We consider the derivative with respect to $z$ of the expression inside the exponential, which is 
$$ \mathcal D_\beta(z) =  \frac{u}{tdgg_1} \sqrt{\frac{ H_1Y}{qz}} - \frac{\beta H_1}{tdgg_1} .$$ 
If $|D_\beta(z)| \geq q^{\epsilon_1/2}$, we can do integration by parts many times and obtain that the contribution from these terms is negligible. Hence it suffices to consider $ u \in \mathcal I_{z, \beta}$, where $\mathcal I_{z, \beta}$ is the interval 
$$\pr{\sqrt z \beta \sqrt{\frac{qH_1}{Y}} - 2T_1\sqrt{\frac{q}{H_1Y}}\sqrt z q^{\epsilon_1/2} , \sqrt z \beta \sqrt{\frac{qH_1}{Y}} + 2T_1\sqrt{\frac{q}{H_1Y}}\sqrt z q^{\epsilon_1/2}}.$$
Note that the length of $\mathcal I_{z, \beta} $ is 
\es{\label{lengthI} |\mathcal I_{z, \beta}| \asymp T_1\sqrt{\frac{q}{H_1Y}} q^{\epsilon_1/2} . }

Let $R = \frac{T_1}{q^{\epsilon_1/4}} \sqrt{\frac{q}{H_1Y}}$. If $|\ell' n - \ell n'| \gg q^{\epsilon_1/2} \frac{N}{g} \frac{NH_1bc}{qd^2g^2g_1} R,$ then 
\begin{align*}
\left| \sqrt{\frac{l}{n}} - \sqrt{\frac{l'}{n'}} \right| 
&\asymp \left| \frac{l}{n} - \frac{l'}{n'} \right| \sqrt{\frac{N}{g} \frac{qd^2g^2g_1}{NH_1bc}} \\
&\gg q^{\epsilon_1/2} \frac{H_1bc}{qd^2gg_1} R\sqrt{\frac{N}{g} \frac{qd^2g^2g_1}{NH_1bc}}\\ 
&\gg q^{\epsilon_1/4} T_1 \sqrt{\frac{bc}{d^2gg_1Y}}\\
&\asymp q^{\epsilon_1/4} \frac{t}{\sqrt{Y}} \sqrt{bcgg_1},
\end{align*}for $t\sim \frac{T_1}{dgg_1}$.
Thus, integration by parts many times with respect to $x'$ shows that the contribution from these terms is negligible. So we now assume
\es{\label{eqn:thinregion} |\ell' n - \ell n'| \ll q^{\epsilon_1/2} \frac{N}{g} \frac{NH_1bc}{qd^2g^2g_1} R.}

In order to deal with the condition in \eqref{eqn:thinregion}, we divide the intervals for $\ell, \ell'$ into intervals of length $R\frac{NH_1bc}{qd^2g^2g_1} $ and $n, n'$ into intervals of length $R \frac{N}{g}.$  Trivially, we need $\asymp \frac{1}{R^4}$ such tuples $(\mathcal I_{\mathfrak n}, \mathcal I_{\mathfrak n'}, \mathcal I_{\mathfrak l}, \mathcal I_{\mathfrak l'})$ to cover the whole range.  However, for fixed  $\mathcal I_{\mathfrak n}, \mathcal I_{\mathfrak l}, \mathcal I_{\mathfrak l'}$, where $\mathfrak n, \mathfrak n', \mathfrak l, \mathfrak l'$ are the left-ended points of intervals $\mathcal I_{\mathfrak n}, \ \mathcal I_{\mathfrak n'},\ \mathcal I_{\mathfrak l},\ \mathcal I_{\mathfrak l'}$, respectively, we may assume by \eqref{eqn:thinregion} that 
\begin{equation}\label{eqn:intnumber}
\left| \frac{\ell'n}{\ell} - n'\right| \ll q^{\epsilon_1/2} \frac{N}{g} R,
\end{equation}   for some $n \in \mathcal I_{\mathfrak n}, n' \in \mathcal I_{\mathfrak n'}, l \in \mathcal I_{\mathfrak l}$ and $l' \in \mathcal I_{\mathfrak l'}$.  However, due to our restriction on the length of the intervals, this implies that in fact \eqref{eqn:intnumber} is also satisfied by $\mathfrak{n, n', l}$ and $\mathfrak l'$.  Thus, there are $O(q^{\epsilon_1/2})$ choices for $I_{\mathfrak n'}$. Hence there are only $O\pr{\frac{q^{\epsilon_1/2}}{R^3}}$ relevant four tuples $(\mathcal I_{\mathfrak n}, \mathcal I_{\mathfrak n'}, \mathcal I_{\mathfrak l}, \mathcal I_{\mathfrak l'})$ with endpoints satisfying \eqref{eqn:thinregion}, and we obtain that

\es{\label{eqn:Gs1_f} &\mathcal G_{s, 1}(T_1, H_1; q) \ll \frac{\sqrt {H_1}}{T_1^2 \sqrt{q} (bc)^{3/2}}\sum_{ 0 \leq \beta \ll q^{\epsilon}    \sqrt{\frac{Y}{H_1q}}} \int_0^{\infty} | \Psi_5(z)| \int_0^\infty \int_{\mathcal I_{z, \beta}} \left| (u - \sqrt{x'}){\Psi_7\pr{(u - \sqrt {x'})^2}}\right| \\
	& \hskip 1in \times   \sum_{\substack{d| bc \\ (d,q)=1}} d \sum_{ \substack{g \leq T/d \\ (g, q) = 1}} g^{-1/2} \sum_{\substack{g_1 \leq \frac{T}{dg} \\ (g_1, q) = 1}} g_1^{-1/2} \sumt_{(\mathcal I_{\mathfrak n}, \mathcal I_{\mathfrak n'}, \mathcal I_{\mathfrak l}, \mathcal I_{\mathfrak l'})} S(\mathcal I_{\mathfrak n}, \mathcal I_{\mathfrak n'}, \mathcal I_{\mathfrak l}, \mathcal I_{\mathfrak l'}) {\Psi_7\pr{x'}}\>du \> dx' \> dz,}
where $\sumt$ is the sum over the relevant tuples with all points satisfying \eqref{eqn:intnumber}, and
\est{
	S(\mathcal I_{\mathfrak n}, \mathcal I_{\mathfrak n'}, \mathcal I_{\mathfrak l}, \mathcal I_{\mathfrak l'}) &= \sum_{\substack{t \sim \frac{T_1}{dgg_1} \\ (t, q\frac{bc}{d}) = 1 }}  \sumstar_{\substack{r \mod {tdgg_1} \\ (\bar r + \beta q, tdgg_1) = 1}} \\
	&\times \left| \sumtwo_{\substack{n \in \mathcal I_{n}, \ n' \in \mathcal I_{n'} \\ (nn', t) = 1} } \frac{\sigma_2(ng)}{n^{3/4}} \Psi\left( \frac {ng}{N}\right)  \frac{\sigma_2(n'g)}{n'^{3/4}} \Psi\left( \frac {n'g}{N}\right) \right. \\
	&\times \sumtwo_{\substack{  \ell \in \mathcal I_{\ell}, \ \ell' \in \mathcal I_{\ell'} \\ (\ell \ell', t) = 1}}  \frac{\sigma_2(\ell g_1)}{\ell^{1/4}} \e{\frac{-\overline {\frac{bc}{d}n} \bar r \ell}{t}}  \frac{\sigma_2(\ell' g_1)}{\ell'^{1/4}} \e{\frac{-\overline {\frac{bc}{d}n'} (\bar r + \beta q) \ell'}{t}}    \\
	& \left. \times  \e{ -\frac{2 u}{t} \sqrt{\frac{\ell Y}{bcngg_1}}  +\frac{2 }{t} \sqrt{\frac{ x' Y}{bcgg_1}} \pr{ \sqrt{\frac{\ell}{n}} - \sqrt{\frac{\ell'}{n'}}} }\right|.   }
Further
$$ |S(\mathcal I_{\mathfrak n}, \mathcal I_{\mathfrak n'}, \mathcal I_{\mathfrak l}, \mathcal I_{\mathfrak l'})| \leq |S_1(\mathcal I_{\mathfrak n}, \mathcal I_{\mathfrak l})| + |S_2(\mathcal I_{\mathfrak n'}, \mathcal I_{\mathfrak l'})|,$$
where 
\est{ S_1 := &S_1(\mathcal I_{\mathfrak n}, \mathcal I_{\mathfrak l}) = \sum_{\substack{t \sim \frac{T_1}{dgg_1} \\ (t, q\frac{bc}{d}) = 1 }}  \sumstar_{\substack{r \mod {tdgg_1} \\ (\bar r + \beta q, tdgg_1) = 1}} \\
	&\times   \bigg| \sum_{\substack{n \in \mathcal I_{\mathfrak n} \\ (n, t) = 1} }  \frac{\sigma_2(ng)}{n^{3/4}} \Psi\left( \frac {ng}{N}\right)   \sum_{\substack{  \ell \in \mathcal I_{\mathfrak l} \\ (\ell, t) = 1}}  \frac{\sigma_2(\ell g_1)}{\ell^{1/4 }} \e{\frac{-\overline {\frac{bc}{d}n} \bar r \ell}{t}}  \e{ \pr{-\frac{2 u}{t} \sqrt{\frac{ Y}{bcgg_1}}  +\frac{2 }{t} \sqrt{\frac{ x' Y}{bcgg_1}} } \sqrt{\frac{\ell}{n}} } \bigg|^2, }
and
\est{ S_2 := &S_2(\mathcal I_{\mathfrak n'}, \mathcal I_{ \mathfrak l'}) =  \sum_{\substack{t \sim \frac{T_1}{dgg_1} \\ (t, q\frac{bc}{d}) = 1 }}  \sumstar_{\substack{r \mod {tdgg_1} \\ (\bar r + \beta q, tdgg_1) = 1}} \bigg| \sum_{ \substack{n' \in \mathcal I_{\mathfrak n'} \\ (n', t) = 1} } \frac{\sigma_2(n'g)}{n'^{3/4 }} \Psi\left( \frac {n'g}{N}\right) \\
	&\hskip 1in \times  \sum_{\substack{   \ell' \in \mathcal I_{\mathfrak l'} \\ (\ell', t) = 1}}    \frac{\sigma_2(\ell' g_1)}{\ell'^{1/4}} \e{\frac{-\overline {\frac{bc}{d}n'} (\bar r + \beta q) \ell'}{t}}   \e{ - \frac{2 }{t} \sqrt{\frac{ x'Y}{bcgg_1}} \sqrt{\frac{\ell'}{n'}} }\bigg|^2 . }
By a change of variables in $r$, we have

\es{ \label{def:S1}  S_1(\mathcal I_{\mathfrak n}, \mathcal I_{\mathfrak l})
	&\leq 	dgg_1 \sum_{\substack{t \sim \frac{T_1}{dgg_1}  }}  \sumstar_{\substack{r \mod {t} }}\bigg| \sum_{\substack{n \in \mathcal I_{\mathfrak n} \\ (n, t) = 1} } \frac{\sigma_2(ng)}{n^{3/4}} \Psi\left( \frac {ng}{N}\right)   \sum_{\substack{  \ell \in \mathcal I_{\mathfrak l} \\ (\ell, t) = 1}}  \frac{\sigma_2(\ell g_1)}{\ell^{1/4 }} \\
	&\hskip 1in \times \e{ \pr{-\frac{2 u}{t} \sqrt{\frac{ Y}{bcgg_1}}  +\frac{2 }{t} \sqrt{\frac{ x' Y}{bcgg_1}} } \sqrt{\frac{\ell}{n}} }  \e{\frac{ r \bar n  \ell}{t}} \bigg|^2 .}

Similarly, we have
 
\es{\label{def:S2} &S_{2}(\mathcal I_{\mathfrak n'}, \mathcal I_{\mathfrak l'}) 
	\leq 	dgg_1  \sum_{\substack{t \sim \frac{T_1}{dgg_1}  }}  \sumstar_{\substack{r \mod {t} }}  \bigg|   \sum_{ \substack{n' \in \mathcal I_{\mathfrak n'} \\ (n', t) = 1} } \frac{\sigma_2(n'g)}{n'^{3/4 }} \Psi\left( \frac {n'g}{N}\right)  \sum_{\substack{   \ell' \in \mathcal I_{\mathfrak l'} \\ (\ell', t) = 1}}    \frac{\sigma_2(\ell' g_1)}{\ell'^{1/4}}         \\
	&\hskip 3in \times  \e{ - \frac{2 }{t} \sqrt{\frac{ x'Y}{bcgg_1}} \sqrt{\frac{\ell'}{n'}} } \e{\frac{ r \bar n'  \ell'}{t}}\bigg|^2  .}
Hence
\est{
	\mathcal G_{s, 1}(T_1, H_1; q) \ll \mathcal P_1(T_1, H_1; q) + \mathcal P_2(T_1, H_1; q),}
where $\mathcal P_i(T_1, H_1; q)$ has the same expression as the right hand side of \eqref{eqn:Gs1_f}  but we replace $S(\mathcal I_{\mathfrak n}, \mathcal I_{\mathfrak n'}, \mathcal I_{\mathfrak l}, \mathcal I_{\mathfrak l'})$ by the right hand side of $S_i$ in \eqref{def:S1} and \eqref{def:S2}. Hence to show that $\mathcal G_{s, 1}(T_1, H_1; q) \ll q^{\epsilon}$, it is enough to prove the following.
\begin{lem} Let all notations be defined as above. Then for $i = 1, 2$, we have
	\est{ S_i \ll q^\epsilon   \pr{\frac{T_1^2}{dgg_1} + R^2 \frac{N}{g} \frac{NH_1bc}{qdg}} R^2 \sqrt{\frac{H_1bc}{qd^2gg_1}}.}
	From this we deduce that
	\est{P_i(T_1, H_1; q) \ll q^{\epsilon}.}
\end{lem}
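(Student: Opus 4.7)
The plan is to apply the $GL(1)$ large sieve inequality of Lemma~\ref{lem:largesieve} to both $S_1$ and $S_2$, after performing a variable separation in the oscillatory phase that couples $n$ and $\ell$. By symmetry, we focus on $S_1$. Combining the two exponential factors in the final line of~\eqref{def:S1} via $u=\sqrt{x}+\sqrt{x'}$ simplifies the oscillation to $e(C\sqrt{\ell/n})$ with $C=C(t,x):=-(2/t)\sqrt{xY/(bcgg_1)}$. The key observation specific to Case~3 is that on $\mathcal I_{\mathfrak l}\times\mathcal I_{\mathfrak n}$ one has $|C\sqrt{\ell/n}|\asymp\sqrt{H_1Y/q}/T_1\gg q^{\epsilon_1}$, but the \emph{variation} of this phase across the product interval is $\ll R\cdot|C\sqrt{\ell/n}|\ll q^{-\epsilon_1/4}$, thanks to the definition $R=T_1 q^{-\epsilon_1/4}\sqrt{q/(H_1Y)}$ and the short interval lengths $R\cdot N/g$ and $R\cdot NH_1bc/(qd^2g^2g_1)$.

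A first-order Taylor expansion around $(\mathfrak l,\mathfrak n)$ then yields
\est{
C\sqrt{\ell/n}=C\sqrt{\mathfrak l/\mathfrak n}+\tfrac{1}{2}C\sqrt{\mathfrak l/\mathfrak n}\pr{\tfrac{\ell-\mathfrak l}{\mathfrak l}-\tfrac{n-\mathfrak n}{\mathfrak n}}+O(q^{-\epsilon_1/2}),
}
whose linear part splits exactly as an $\ell$-only piece minus an $n$-only piece. Exponentiating produces $e(C\sqrt{\ell/n})=e(C\sqrt{\mathfrak l/\mathfrak n})\,e(\mathcal A\ell)\,e(-\mathcal B n)\,(1+g(\ell,n))$ with explicit scalars $\mathcal A,\mathcal B$ and $|g|\ll q^{-\epsilon_1/2}$; the error $g$ can be further developed in a rapidly convergent double Taylor series whose monomials each factor across $\ell$ and $n$. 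Absorbing the modulus-one factors $e(\mathcal A\ell)$ into $\widetilde\beta_\ell$ and $e(-\mathcal B n)$ into $\widetilde\alpha_n$, Lemma~\ref{lem:largesieve} then applies with $Q=T_1/(dgg_1)$, $M=R\cdot NH_1bc/(qd^2g^2g_1)$ and $N_{\mathrm{LS}}=R\cdot N/g$. Divisor bounds give $\|\widetilde\alpha\|^2\ll q^\epsilon R(N/g)^{-1/2}$ and $\|\widetilde\beta\|^2\ll q^\epsilon R(NH_1bc/(qd^2g^2g_1))^{1/2}$, and the change of variables in $r$ leading to~\eqref{def:S1} contributes an overall $dgg_1$. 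Distributing this factor inside the large sieve bound $(Q^2+MN_{\mathrm{LS}})\|\widetilde\alpha\|^2\|\widetilde\beta\|^2$ recovers exactly the two summands $T_1^2/(dgg_1)$ and $R^2(N/g)(NH_1bc/(qdg))$ in the target estimate for $S_i$.

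To deduce $P_i(T_1,H_1;q)\ll q^\epsilon$, we substitute the bound on $S_i$ into~\eqref{eqn:Gs1_f}, sum over the $O(q^\epsilon\sqrt{Y/(H_1q)})$ range of $\beta$, use $|\mathcal I_{z,\beta}|\asymp q^{\epsilon_1/2}T_1\sqrt{q/(H_1Y)}$ from~\eqref{lengthI}, insert the $O(q^{\epsilon_1/2}/R^3)$ count of relevant tuples established just before~\eqref{eqn:Gs1_f}, and sum over $d\mid bc$ and $g,g_1$ via the divisor bound (costing only $q^\epsilon$). Substituting $R=T_1q^{-\epsilon_1/4}\sqrt{q/(H_1Y)}$ and invoking the Case~3 hypothesis $T_1\ll q^{-\epsilon_1}\sqrt{H_1Y/q}$, all powers of $q^{\epsilon_1}$ balance to yield a bound $\ll q^{\epsilon+C\epsilon_1}$ for some absolute constant $C$, and choosing $\epsilon_1$ sufficiently small completes the proof.

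The principal obstacle is the variable separation in the coupled phase $C\sqrt{\ell/n}$: this succeeds only because Case~3 together with the thin-region restriction~\eqref{eqn:thinregion} forces $R$ to be small enough that $R\cdot|C\sqrt{\ell/n}|\ll 1$, which is precisely what legitimizes the linearization. Outside this regime, the product form required by Lemma~\ref{lem:largesieve} would be unavailable. The remaining steps (norm estimates via divisor bounds, tuple counting, and integration) are routine.
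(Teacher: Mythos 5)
Your proposal matches the paper's argument in essentially every respect: you separate the coupled phase $e(C\sqrt{\ell/n})$ by Taylor expanding around the left endpoints $(\mathfrak l,\mathfrak n)$ of the short intervals, using exactly the bound $R\cdot T_1^{-1}\sqrt{H_1Y/q}\ll q^{-\epsilon_1/4}$ that makes the expansion rapidly convergent (the paper runs the full Taylor series with a binomial split of each term, which is the same device as your "linear part plus factorable correction series"), then apply Lemma~\ref{lem:largesieve} with the same $Q$, $M$, $N_{\mathrm{LS}}$ and the same divisor-bound norm estimates, and assemble $P_i$ from the tuple count, $|\mathcal I_{z,\beta}|$, and the $\beta$-sum exactly as in the paper. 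This is correct and is the paper's proof.
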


\begin{proof} We start with bounding $S_2(\mathcal I_{\mathfrak n'}, \mathcal I_{\mathfrak l'}).$  We will eventually apply the large sieve Lemma \ref{lem:largesieve}; in order to do this, we wish to separate $l'$ from $n'$.
We recall that $ \mathfrak n, \mathfrak n', \mathfrak l, \mathfrak l'$ are the left-ended points of intervals $\mathcal I_{\mathfrak n}, \  \mathcal I_{\mathfrak n'}, \  \mathcal I_{\mathfrak l}, \ \mathcal I_{\mathfrak l'}   $. Let 
$$f(x) = \e{x}. $$ 
We will write a Taylor polynomial for $\e{ - \frac{2 }{t} \sqrt{\frac{ x'Y}{bcgg_1} }\sqrt{\frac{\ell'}{n'}} }$ around $x = - \frac{2 }{t} \sqrt{\frac{ x'Y}{bcgg_1} }\sqrt{\frac{\mathfrak l'}{\mathfrak n'} } $. Hence
\est{ \e{ - \frac{2 }{t} \sqrt{\frac{\ell' x'Y}{bcn'gg_1}} } =  \sum_{\alpha = 0}^{\infty}   \frac{1}{\alpha!}f^{(\alpha)}\pr{  - \frac{2 }{t} \sqrt{\frac{\mathfrak l' x'Y}{bc\mathfrak n'gg_1}}} \pr{ - \frac{2 }{t} \sqrt{\frac{ x'Y}{bcgg_1}}  \pr{ \sqrt{\frac{\ell'}{n'}} - \sqrt{\frac{\mathfrak l'}{\mathfrak n'}}  }  }^\alpha.}
Moreover since $|\ell' - \mathfrak l'| \ll R \frac{NH_1bc}{qd^2g^2g_1}$ and $|n' - \mathfrak n'| \ll R \frac{N}{g}$, we have
\es{\label{boundtaylor} &f^{(\alpha)}\pr{  - \frac{2 }{t} \sqrt{\frac{\mathfrak l' x'Y}{bc\mathfrak n'gg_1}}} \pr{ - \frac{2 }{t} \sqrt{\frac{ x'Y}{bcgg_1}}  \pr{ \sqrt{\frac{\ell'}{n'}} - \sqrt{\frac{\mathfrak l'}{\mathfrak n'}}  }  }^\alpha \\
	&\ll_{\alpha} \pr{\frac{1}{t} \sqrt{\frac{ Y}{bcgg_1}}}^\alpha \sum_{j = 0}^\alpha {\alpha \choose j} \left|  \frac{\sqrt {\ell'} - \sqrt{\mathfrak l'}}{\sqrt{n'}}  \right|^j  \left|  \sqrt{\mathfrak l'} \pr{\frac{1}{\sqrt{n'}} - \frac{1}{\sqrt{\mathfrak n'}}}  \right|^{\alpha - j}  \\
	&\ll_{\alpha} \pr{\frac{1}{t} \sqrt{\frac{ Y}{bcgg_1}}}^\alpha \sum_{j = 0}^\alpha {\alpha \choose j} \left|  \frac{\ell - \mathfrak l'}{\sqrt{n'}(\sqrt {\ell'} + \sqrt{\mathfrak l'})}  \right|^j  \left|  \sqrt{\mathfrak l'} \frac{\mathfrak n' - n'}{\sqrt{n'\mathfrak n'} ( \sqrt{n'} + \sqrt{\mathfrak n'})}   \right|^{\alpha - j} \\
	&\ll_{\alpha} R^\alpha \left(  \frac{1}{t} \sqrt{\frac{ Y}{bcgg_1}} \sqrt{ \frac{NH_1bc}{qd^2g^2g_1}} \sqrt{\frac{1}{N/g}}\right)^{\alpha} \ll_{\alpha} \pr{ \frac{R}{T_1} \sqrt{\frac{H_1Y}{q}}}^{\alpha}  \ll q^{-\alpha \epsilon_1/4}. }
Choosing $B$ such that $q^{-B\epsilon_1/4} \ll q^{-100},$ we obtain that
\est{ f\pr{ - \frac{2 }{t} \sqrt{\frac{\ell' x'Y}{bcn'gg_1}} } =  \sum_{0 \leq \alpha \leq B }   \frac{1}{\alpha!} f^{(\alpha)}\pr{  - \frac{2 }{t} \sqrt{\frac{\mathfrak l' x'Y}{bc\mathfrak n'gg_1}}} \pr{ - \frac{2 }{t} \sqrt{\frac{ x'Y}{bcgg_1}}  \pr{ \sqrt{\frac{\ell'}{n'}} - \sqrt{\frac{\mathfrak l'}{\mathfrak n'}}  }  }^\alpha + O(q^{-100}).}
Therefore, 
\est{ &S_{2}(\mathcal I_{\mathfrak n'}, \mathcal I_{\mathfrak l'}) 
	\ll 	dgg_1  \sum_{0 \leq \alpha \leq B}  \pr{\frac{dgg_1}{T_1} \sqrt{\frac{ Y}{bcgg_1}}}^{2\alpha}   \sum_{ 0 \leq j \leq \alpha} \sum_{\substack{t \sim \frac{T_1}{dgg_1}  }}  \sumstar_{\substack{r \mod {t} }}  \bigg|   \sum_{ \substack{n' \in \mathcal I_{\mathfrak n'} \\ (n', t) = 1} } \frac{\sigma_2(n'g)}{n'^{3/4 } (\sqrt{n'})^j}   \pr{\frac{1}{\sqrt{n'}} - \frac{1}{\sqrt{\mathfrak n'}}  }^{\alpha - j}   \\
	&\hskip 2in \times  \Psi\left( \frac {n'g}{N}\right)      \sum_{\substack{   \ell' \in \mathcal I_{\mathfrak l'} \\ (\ell', t) = 1}}    \frac{\sigma_2(\ell' g_1)}{\ell'^{1/4}} (\sqrt{\mathfrak l'})^{\alpha-j}     \pr{  {\sqrt {\ell'} - \sqrt{\mathfrak l'}}  }^j  \e{\frac{ r \bar n'  \ell'}{t}}\bigg|^2  + O(q^{-50}) .}
Then by large sieve inequality in Lemma \ref{lem:largesieve} and the same computation as Equation \eqref{boundtaylor}, we have
\est{
	&S_{2}(\mathcal I_{\mathfrak n'}, \mathcal I_{\mathfrak l'}) 
	\ll 	dgg_1 q^\epsilon \sum_{0 \leq \alpha \leq B}  \pr{\frac{dgg_1}{T_1} \sqrt{\frac{ Y}{bcgg_1}}}^{2\alpha}  \pr{\frac{T_1^2}{d^2g^2g_1^2} + R^2 \frac{N}{g} \frac{NH_1bc}{qd^2g^2g_1}}  \\
	&\hskip 1in \times \sum_{n' \in \mathcal I_{\mathfrak n'}} \frac{1}{n'^{3/2 } (\sqrt{n'})^{2j}}   \left|\frac{1}{\sqrt{n'}} - \frac{1}{\sqrt{\mathfrak n'}}  \right|^{2\alpha - 2j}   \sum_{\substack{   \ell' \in \mathcal I_{\mathfrak l'} }}    \frac{1}{\ell'^{1/2}} (\sqrt{\mathfrak l'})^{2\alpha-2j}     \left|  {\sqrt {\ell'} - \sqrt{\mathfrak l'}}  \right|^{2j}  \\
	&\ll  	dgg_1 q^\epsilon \sum_{0 \leq \alpha \leq B}  \pr{R\frac{dgg_1}{T_1} \sqrt{\frac{ Y}{bcgg_1}} \sqrt{ \frac{NH_1bc}{qd^2g^2g_1}} \sqrt{\frac{1}{N/g}}}^{2\alpha}  \pr{\frac{T_1^2}{d^2g^2g_1^2} + R^2 \frac{N}{g} \frac{NH_1bc}{qd^2g^2g_1}} R^2 \sqrt{\frac{H_1bc}{qd^2gg_1}} \\
  &\ll   q^\epsilon   \pr{\frac{T_1^2}{dgg_1} + R^2 \frac{N}{g} \frac{NH_1bc}{qdg}} R^2 \sqrt{\frac{H_1bc}{qd^2gg_1}}	.}

The calculation of $S_1(\mathcal I_{\mathfrak n}, \mathcal I_{\mathfrak l})$ proceeds similarly. Here we write Taylor polynomials for $\e{ \pr{-\frac{2 u}{t} \sqrt{\frac{ Y}{bcgg_1}}  +\frac{2 }{t} \sqrt{\frac{ x' Y}{bcgg_1}} } \sqrt{\frac{\ell}{n}}} $  around $x = \pr{-\frac{2 u}{t} \sqrt{\frac{ Y}{bcgg_1}}  +\frac{2 }{t} \sqrt{\frac{ x' Y}{bcgg_1}} } \sqrt{\frac{\mathfrak l}{\mathfrak n}}$ instead, and we derive that $S_1(\mathcal I_{\mathfrak n}, \mathcal I_{\mathfrak l})$ can be bounded by the same quantity. 

Next we bound $P_i(T_1, H_1; q)$. First, the length of $\mathcal I_{z, \beta}$ in Equation (\ref{lengthI}) and the fact that the number of relevant tuples is $O\pr{\frac{q^{\epsilon_1/2}}{R^3}}$ yields 
\est{\label{finalboundP2} \mathcal P_i(T_1, H_1; q) & \ll  \frac{q^{\epsilon} \sqrt {H_1}}{T_1^2 \sqrt{q} (bc)^{3/2}}\sum_{ 0 \leq \beta \ll q^{\epsilon}    \sqrt{\frac{Y}{H_1q}}} \int_0^{\infty} |\Psi_5(z)| \int_0^\infty \int_{\mathcal I_{z, \beta}} |u - \sqrt{x'}||{\Psi_7\pr{(u - \sqrt {x'})^2}}{\Psi_7\pr{x'}}| \\
	&  \times   \sum_{\substack{d| bc \\ (d,q)=1}} d \sum_{ \substack{g \leq T/d \\ (g, q) = 1}} g^{-1/2} \sum_{\substack{g_1 \leq \frac{T}{dg} \\ (g_1, q) = 1}} g_1^{-1/2}  \sumt_{(\mathcal I_{\mathfrak n}, \mathcal I_{\mathfrak n'}, \mathcal I_{\mathfrak l}, \mathcal I_{\mathfrak l'})} \pr{\frac{T_1^2}{dgg_1} + R^2 \frac{N}{g} \frac{NH_1bc}{qdg}} R^2 \sqrt{\frac{H_1bc}{qd^2gg_1}} \> du \> dx' \> dz \\
	&\ll q^{\epsilon + \epsilon_1} \frac{\sqrt {H_1}}{T_1^2 \sqrt{q} } \sqrt{\frac{Y}{H_1q}} T_1\sqrt{\frac{q}{H_1Y}}\frac{1}{R} \pr{T_1^2 + \frac{R^2N^2H_1}{q}} \sqrt{\frac{H_1}{q}} 
\ll q^{\epsilon},
}
where we recall that $R = \frac{T_1}{q^{\epsilon_1/4}} \sqrt{\frac{q}{H_1Y}} $, $N \ll \sqrt Y$, $H_1 \ll q$, $Y \leq q^{2 + \epsilon}$, and we choosing sufficiently small $\epsilon_1.$ This completes the lemma.


\end{proof}
\subsubsection*{ Calculation of $\mathcal G_{s, 2}(T_1, H_1; q)$} 
By changing variable and letting $u = \sqrt x - \sqrt{x'}$, we have that 
\est{\mathcal J_2(n, n', \ell, \ell') 
	&= \int_{0}^\infty \Psi_5(z) \int_0^\infty \int_0^\infty  \e{ -\frac{2 u}{t} \sqrt{\frac{\ell Y}{bcngg_1}}  -\frac{2 }{t} \sqrt{\frac{ x' Y}{bcgg_1}} \pr{ \sqrt{\frac{\ell}{n}} - \sqrt{\frac{\ell'}{n'}}} } \\
	&\times \e{  \frac{2u}{tdgg_1} \sqrt{\frac{z H_1Y}{q}} - \frac{\beta H_1 z}{tdgg_1}}  2(u + \sqrt{x'}){\Psi_7\pr{(u + \sqrt {x'})^2}}{\Psi_7\pr{x'}}  \>du \> dx' \> dz.}
Note that the function vanishes when $u$ is outside the range $1 \leq u + \sqrt{x'} \leq \sqrt 2$ because of the support of $\Psi_7(x)$. Further, $\mathcal J_2$ has a similar expression to $\mathcal J_1$ in Equation \ref{def:J1}. In fact, we can use the same arguments as bounding $\mathcal G_{s, 1}(T_1, H_1; q)$ to conclude that 
$$ \mathcal G_{s,2}(T_1, H_1;q) \ll q^\epsilon.$$
From the calculation of both $\mathcal G_{s, i}(T_1, H_1; q)$, we arrive at Lemma \ref{lem:F2T1small}.  From Lemma \ref{lem:F2T1big} - \ref{lem:F2T1small}, we obtain Lemma \ref{lem:boundFi} for $i = 2.$

\section{Bounding $\mathcal F_3(T_1, H_1; q)$} \label{sec:F3}

In this section, we sketch how to prove Lemma \ref{lem:boundFi} for $i = 3.$ We recall that $\mathcal F_3(T_1, H_1; q)$ is defined in Equation (\ref{def:Fi}).  This is the same expression as for $\mathcal F_2$ with $Y_0$ replaced by $K_0$.  Thus, the proof proceeds along the same lines, but we use \eqref{asymK0} in place of \eqref{asymY0} and \eqref{K0int} in place of \eqref{Y0int}.  Here, the proofs of Case 1 and Case 2 are similar.  The proof for Case 3 is significantly easier because the expression for $K_0$ in \eqref{asymK0} is a rapidly decreasing smooth function with small derivatives, as opposed to the expression for $Y_0(2\pi x)$ in \eqref{asymY0} which involves the phase $e(x - 1/8)$.  Thus, in Case 3, we can simply integrate by parts many times to show that the contribution in that range is negligible.

\appendix

\section*{Acknowledgement} 
Vorrapan Chandee would like to acknowledge the financial support from the Thailand Research Fund (TRF) under the contract number TRG5880076. She also would like to thank Kannan Soundararajan and Vichian Laohakosol for helping with TRF grant document. Part of this work was done while she was in residence at the Mathematical Sciences Research Institute (MSRI) in Berkeley, California, during the Spring semester of year 2017, supported in part by the National Science Foundation (NSF) under Grant No. DMS-1440140.


\begin{thebibliography}{9999}
	



\bibitem{AS} M. Abramowitz and I.A. Stegun, I. A. (Eds.), {\it Handbook of Mathematical Functions with Formulas, Graphs, and Mathematical Tables}, 9th printing. New York: Dover, 1972.

\bibitem{CL} V. Chandee and Xiannan Li, {\it The eighth moment of Dirichlet $L$-functions}, Adv. Math. 259 (2014), 339-375.

\bibitem{CL2} V. Chandee and Xiannan Li, {\it The sixth moment of automorphic $L$-functions}, Algebra and Number Theory  (2017)  11 (3),  583-633.







\bibitem{Dj} G. Djankovi\'{c}, {\it The sixth moment of the family of $\Gamma_1(q)$-automorphic $L$-functions}, Arch. Math. 97 (2011), 535 - 547.


\bibitem{table} A. Erd\'{e}lyi, W. Magnus, F. Oberhettinger, and F. G. Tricomi. {\it Tables of Integral Transforms}, volume 2 of Bateman Manuscript Project. McGraw-Hill, New
York, 1954.








\bibitem{IK} H. Iwaniec and E. Kowalski, {\it Analytic Number Theory}, vol. 53, American Mathematical Society Colloquium Publications, Rhode Island, 2004.

\bibitem{IL} H.Iwaniec and Xiaoqing Li, {\it The orthogonality of Hecke eigenvalues}, Compos. Math. 143 (2007), p.541-565.





\bibitem{Luo} W. Luo, {\it The spectral mean value for linear forms in twisted coefficients of cusp forms}. Acta Arith. 70  (1995), no. 4, 377?391.


\bibitem{Ob} F. Oberhettinger, {\it Tables of Bessel transforms}, Springer, Berlin, 1972.







\bibitem{Watt}  G. N. Watson, {\it A treatise on the theory of Bessel functions}, Cambridge University Press, Cambridge 1944.

\bibitem{Young} M. Young, {\it The second moment of $GL(3) \times GL(2)$ $L$-functions at special points}, Adv. Math. 226 (2011) no. 4, 3550 - 3578.
\end{thebibliography}
\end{document}